\newcolumntype{L}[1]{>{\raggedright\let\newline\\\arraybackslash\hspace{0pt}}m{#1}}
\newcolumntype{R}[1]{>{\raggedleft\let\newline\\\arraybackslash\hspace{0pt}}m{#1}}
\newtheorem{lemma}{Lemma}
\newtheorem{remark}[lemma]{Remark}
\newtheorem{theorem}[lemma]{Theorem}
\theoremstyle{definition}
\renewcommand{\gets}{\curvearrowleft}
\providecommand{\N}{{\ensuremath{\mathbbm{N}}}}
\providecommand{\Z}{{\ensuremath{\mathbbm{Z}}}}
\providecommand{\R}{{\ensuremath{\mathbbm{R}}}}
\renewcommand{\P}{{\ensuremath{\mathbbm{P}}}}
\providecommand{\E}{{\ensuremath{\mathbbm{E}}}}
\providecommand{\1}{{\ensuremath{\mathbbm{1}}}}
\providecommand{\F}{{\ensuremath{\mathbbm{F}}}}
\newcommand{\threenorm}[1]{{\left\vert\kern-0.25ex\left\vert\kern-0.25ex\left\vert #1 
    \right\vert\kern-0.25ex\right\vert\kern-0.25ex\right\vert}}
\newcommand{\unif}{\mathfrak{u}}
\providecommand{\var}{{\ensuremath{\mathbbm{V}}}}
\newcommand{\rdown}[1] {\llcorner #1\lrcorner }
\newcommand{\vech}{\operatorname{vec}}
\author[A. Neufeld]{Ariel Neufeld}
\address{Division of Mathematical Sciences, Nanyang Technological University, Singapore}
\email{ariel.neufeld@ntu.edu.sg}
\thanks{Financial support by the MOE AcRF Tier 2 Grant \emph{MOE-T2EP20222-0013} is gratefully acknowledged.}
\author[T.A. Nguyen]{Tuan Anh Nguyen}
\address{Faculty of Mathematics, Bielefeld University, Germany}
\email{tnguyen@math.uni-bielefeld.de}
\author[P. Schmocker]{Philipp Schmocker}
\address{Department of Mathematics, ETH Zurich, Switzerland}
\email{philipp.schmocker@math.ethz.ch}
\title[MLP for McKean--Vlasov SDEs with nonconstant diffusion]{Multilevel Picard approximations\\for McKean--Vlasov stochastic differential equations\\with nonconstant diffusion
}
\keywords{McKean--Vlasov SDE, distribution-dependent SDE, mean-field SDE, high-dimensional SDE, multilevel Picard approximation, curse of dimensionality}
\subjclass[2020]{60H10, 60H35, 65C05, 65C30}
\begin{document}
	
\begin{abstract}
	We introduce multilevel Picard (MLP) approximations for McKean--Vlasov stochastic differential equations (SDEs) with nonconstant diffusion coefficient. Under standard Lipschitz assumptions on the coefficients, we show that the MLP algorithm approximates the solution of the SDE in the $L^2$-sense without the curse of dimensionality. The latter means that its computational cost grows at most polynomially in both the dimension and the reciprocal of the prescribed error tolerance. In two numerical experiments, we demonstrate its applicability by approximating McKean--Vlasov SDEs in dimensions up to 1000.
\end{abstract}
	
\maketitle

\section{Introduction}

McKean–Vlasov stochastic differential equations (SDEs) provide a mathematical framework to analyze the limiting behavior of stochastic systems of weakly interacting particles. In order to describe the limit, these SDEs  not only depend on the state of the process but also on their distribution, and are therefore also referred to as distribution-dependent SDEs or mean-field SDEs in the literature. First introduced by McKean in \cite{McKean1966} to model the Vlasov equation, these equations have since found applications in various fields, including statistical physics (see, e.g., \cite{Kac1956,Sznitman1991,DV1995,BCP1997}), mathematical biology (see, e.g., \cite{Graham1992,ME1999,BCM2007,BFFT2012,MSSZ2020}), social sciences (see, e.g., \cite{CFT2012,CJLW2017,GGSP2022}), mean-field games (see, e.g., \cite{CL2018,Carmona2018,CDLL2019}), stochastic control (see, e.g., \cite{BLM2017}), mathematical finance (see, e.g., \cite{GSS2020}), and the analysis of neural networks (see, e.g., \cite{MMN2018,SS2020,HRSS2021,RVE2022}).

More precisely, for $d \in \N$ and terminal time $T > 0$, we consider McKean--Vlasov SDEs defined for every $t \in [0,T]$ by
\begin{equation}
	\label{EqMcKV}
	X^d_\infty(t) = \xi_d+\int_{0}^{t}\E \!\left[\mu_d(x,X^d_\infty(s))\right]\!|_{x=X^d_\infty(s)}\,ds
	+ \int_{0}^{t}\E \!\left[\sigma_d(x,X^d_\infty(s))\right]\!|_{x=X^d_\infty(s)}\,dW^d(s),
\end{equation}
with initial value $\xi_d \in \R^d$, where $\mu_d: \R^d \times \R^d \rightarrow \R^d$ and $\sigma_d: \R^d \times \R^d \rightarrow \R^{d \times d}$ are sufficiently regular functions, and where $W^d = (W^d(t))_{t \in [0,T]}$ is a $d$-dimensional standard Brownian motion. The solution of \eqref{EqMcKV} can be seen as the limit of weakly interacting particles $(\mathcal{X}^{d,n}_N)_{n = 1,...,N}$, as $N \rightarrow \infty$, where each particle $\mathcal{X}^{d,n}_N := (\mathcal{X}^{d,n}_N(t))_{t \in [0,T]}$ satisfies the SDE defined for every $t \in [0,T]$ by
\begin{equation*}
	\mathcal{X}^{d,n}_N(t) = \xi_d + \frac{1}{N} \sum_{m=1}^N \int_0^t \mu_d(\mathcal{X}^{d,n}_N(s),\mathcal{X}^{d,m}_N(s)) ds
	+ \frac{1}{N} \sum_{m=1}^N \int_0^t \sigma_d(\mathcal{X}^{d,n}_N(s),\mathcal{X}^{d,m}_N(s)) dW^{d,n}(s).
\end{equation*}
This phenomenon is called the \emph{propagation of chaos} and was studied in \cite{Kac1956,McKean1967,Sznitman1991,Lacker2018,GGP2024}. While existence and uniqueness of \eqref{EqMcKV} have been established in \cite{Sznitman1991,MV2020,HSS2021,RZ2021}, the numerical approximation of \eqref{EqMcKV} has been considered in, e.g., \cite{BST2019,STT2019,RSZ2020optimalreg,RSZ2020pathreg,BH2021,BRRS2021,dRES2021,ST2021,BCNdRR2022,CdR2022,CNRS2022,LRS2022,RS2022,AAdRP2023,BRRS2023,CdR2023,CdRS2023,dRST2023,RSZ2023,Agarwal2021}.

In this paper, we are interested in the numerical approximation of \emph{high-dimensional} McKean–Vlasov SDEs. Our contribution is twofold: We introduce a multilevel Picard (MLP) approximation of \eqref{EqMcKV} and show that it overcomes the curse of dimensionality when approximating \eqref{EqMcKV} in the $L^2$-sense. The latter means that the computational cost of the MLP algorithm grows at most polynomially in the space dimension $d$ of the SDE \eqref{EqMcKV} and the reciprocal $\epsilon^{-1}$ of the prescribed $L^2$-error tolerance $\epsilon$. Second, we demonstrate the practical applicability of the MLP algorithm in two numerical examples by approximating McKean--Vlasov SDEs in dimensions up to 1000.

In the literature, only a few papers discuss the numerical approximation of high-dimensional McKean–Vlasov SDEs. First, \cite{HKN2022} has shown that the MLP algorithm does not suffer from the curse of dimensionality when approximating McKean--Vlasov SDEs with \emph{constant diffusion coefficient} $\sigma_d$, whence our paper can be seen as an extension to nonconstant diffusions. This is highly relevant for several applications across different fields: McKean--Vlasov SDEs with nonconstant diffusion arise in the stochastic Ginzburg-Landau equation in physics \cite{Nguyen2013}, in the Cucker–Smale model in flocking theory \cite{EHS2016}, as well as in spatially structured neuronal networks, the FitzHugh–Nagumo model, and the Hodgkin–Huxley model in neuronal biology \cite{BFFT2012,MSSZ2020}, to mention a few. Moreover, \cite{BHJM2021} introduced an MLP algorithm for approximating ordinary (instead of stochastic) differential equations with expectation, which overcomes the curse of dimensionality.

In recent years, \cite{GMW2022,HHL2024,PW2024,NN2025} have adopted deep learning techniques to approximately solve McKean–Vlasov SDEs of the form \eqref{EqMcKV}. While \cite{HHL2024} established an a priori error estimate under specific conditions (but not a full convergence and complexity analysis), only \cite{NN2025} proved that rectified deep neural networks do not suffer from the curse of dimensionality. However, the numerical examples presented in \cite{GMW2022,HHL2024,PW2024} are rather low-dimensional ($d \leq 15$), whereas the results in \cite{NN2025} remain purely theoretical without an explicit algorithm to approximate \eqref{EqMcKV}.

The following theorem is the main result of this paper. Hereby, we denote for every 
$d\in \N$ by $\lVert\cdot \rVert\colon \R^d \to [0,\infty)$ the standard norm on $\R^d$, i.e., 
for all $x=(x_i)_{i\in [1,d]\cap\Z}\in \R^d$ we have that
$\lVert x\rVert^2=\sum_{i=1}^{d}\lvert x_i\rvert^2$.
For every 
$d\in \N$ we also denote by $\lVert\cdot \rVert\colon \R^{d\times d} \to [0,\infty)$ the Hilbert-Schmidt norm on $\R^{d\times d}$, i.e., 
for all $x=(x_{ij})_{i,j\in [1,d]\cap\Z}\in \R^{d\times d}$ we have that
$\lVert x\rVert^2=\sum_{i,j=1}^{d}\lvert x_{ij}\rvert^2$.

\begin{theorem}\label{c01}
Let $T\in (0,\infty)$,  $c\in [1,\infty)$,  $\Theta=\cup_{n\in \N}(\N_0)^n$. 
For every $d\in \N$ let $\xi_d\in \R^d$,
$\mu_d \in C(\R^d\times\R^d,\R^d)$, 
$\sigma_d \in C(\R^d\times\R^d,\R^{d\times d})$. For every $K\in \N$ let $\rdown{\cdot}_K : [0,T] \rightarrow [0,T]$ satisfy for all $t\in (0,T]$ that 
$\rdown{t}_K=\sup( (0,t)\cap\{0,\frac{T}{K},\frac{2T}{K}\ldots,T\})$ and
$\rdown{0}_K=0$.
Assume for all $d\in \N$, $x_1,x_2,y_1,y_2 \in \R^d$ that
\begin{align}
\max \left\{
\left\lVert
\mu_d(x_1,x_2)-\mu_d(y_1,y_2)\right\rVert
,
\left\lVert
\sigma_d(x_1,x_2)-\sigma_d(y_1,y_2)\right\rVert
\right\}\leq 0.5c \lVert x_1-y_1\rVert+0.5c \lVert x_2-y_2\rVert,
\label{a11b}
\end{align}
\begin{align}
\lVert\xi_d\rVert+ \lVert\mu_d (0,0)\rVert+ \lVert \sigma_d(0,0) \rVert\leq cd^c.\label{a11c}
\end{align}
Let $(\Omega,\mathcal{F},\P, (\F_t)_{t\in [0,T]})$ be a filtered probability space which satisfies the usual conditions\footnote{Let $T \in [0,\infty)$ and ${\bf \Omega} = (\Omega,\mathcal{F},\P, (\F_t)_{t\in[0,T]})$ be 
	a filtered probability space. We say that ${\bf \Omega}$
	satisfies the usual conditions if and only if 
	$\{ A \subseteq \Omega: \exists N \in \mathcal{F} \text{ with } A \subseteq N \text{ and } \P(N) = 0 \} \subseteq \F_0$ and $\forall \, t\in [0,T) \colon \mathbb{F}_t 
	= \cap_{ s \in (t,T] } \F_s$.}. Let $\unif^\theta\colon \Omega\to [0,1]$, $\theta\in \Theta$, be independent and identically distributed random variables satisfying for all $\theta\in \Theta$, $t\in [0,1]$ that $\P(\unif^{\theta}\leq t)=t$. For every $d\in \N$ let $W^{d,\theta}=(W^{d,\theta}(t))_{t\in [0,T]}\colon [0,T]\times \Omega\to \R^d$, $\theta\in \Theta$, be independent standard  $(\F_t)_{t\in [0,T]}$-Brownian motions. Assume that $(\unif^\theta)_{\theta\in \Theta}$ and
$(W^{d,\theta})_{d\in \N,\theta\in \Theta}$ are independent. Let $X_{n,m,K}^{d,\theta}\colon [0,T]\times \Omega\to \R^d$, $\theta\in \Theta$, $d,m,K\in \N$, $n\in \N_0$, satisfy for all $\theta\in \Theta$,
$d,K,m,n\in \N$, $t\in [0,T]$ that 
$X^{d,\theta}_{0,m,K}(t)=0$ and
\begin{align} \begin{split} 
&
X^{d,\theta}_{n,m,K}(t)= \xi_d +t\mu_d(0,0)+\sigma_d(0,0)W^{d,\theta}(t)\\
&+ \sum_{\ell=1}^{n-1}\sum_{k=1}^{m^{n-\ell}}
\frac{t}{m^{n-\ell}}
\Bigl[
\mu_d (X_{\ell,m,K}^{d,\theta} (\rdown{t\unif^{(\theta,n,k,\ell)}}_K ),X_{\ell,m,K}^{d,(\theta,n,k,\ell)} (\rdown{t\unif^{(\theta,n,k,\ell)}}_K ))\\
&\qquad\qquad\qquad\qquad
-
\mu_d (X_{\ell-1,m,K}^{d,\theta} (\rdown{t\unif^{(\theta,n,k,\ell)}}_K ),X_{\ell-1,m,K}^{d,(\theta,n,k,\ell)} (\rdown{t\unif^{(\theta,n,k,\ell)}}_K ))
\Bigr]
\\
&+ \sum_{\ell=1}^{n-1}\sum_{k=1}^{m^{n-\ell}}\int_{0}^{t}
\frac{1}{m^{n-\ell}}
\Bigl[
\sigma_d(X_{\ell,m,K}^{d,\theta} (\rdown{s}_K ),X_{\ell,m,K}^{d,(\theta,n,k,\ell)} (\rdown{s}_K ))\\
&\qquad\qquad\qquad\qquad\qquad
-
\sigma_d(X_{\ell-1,m,K}^{d,\theta} (\rdown{s}_K ),X_{\ell-1,m,K}^{d,(\theta,n,k,\ell)} (\rdown{s}_K ))\Bigr]
dW^{d,\theta}(s).
\end{split}\label{a05c}
\end{align}
For every $d\in \N$ let  $\mathsf{cost}_{\mu_d} $ and 
$\mathsf{cost}_{\sigma_d} $ be an upper bound of the computational cost to calculate $\mu_d (x,y)$ and $\sigma_d(x,y)$
for any $x,y\in \R^d$. Let $\mathsf{cost}_\mathsf{rv}$ be 
an upper bound of 
the computational cost to generate a scalar random variable. For every $d,m,K\in \N$, $n\in \N_0$ let 
$\mathsf{C}_{n,m,K}^d$ denote the computational cost to construct the whole discrete process $(X_{n,m,K}^{d,\theta}(\frac{kT}{K}))_{k\in [1,K]\cap\Z}$ given that we have prepared a discrete Brownian path $W^{d,\theta}(\frac{kT}{K}))_{k\in [1,K]\cap\Z} $.
Assume for all $d\in \N$ that
\begin{align}
\mathsf{cost}_{\mu_d} +
\mathsf{cost}_{\sigma_d}+\mathsf{cost}_\mathsf{rv}\leq cd^c.\label{c10}
\end{align}
Then the following holds:
\begin{enumerate}[(i)]
\item\label{b02} For all $d\in \N$, $\theta\in \Theta$ there exists
a unique $\R^d$-valued $(\F_t)_{t\in [0,T]}$-adapted process $X_\infty^{d,\theta}=(X_\infty^{d,\theta}(t))_{t\in [0,T]}$ with continuous sample paths such that we have $\P$-a.s.\ for all $t\in [0,T]$ that $
\sup_{s\in [0,T]}\lVert X_\infty^{d,\theta}(s)\rVert_{L^2(\P;\R^d)}<\infty
$ and
\begin{align}
	\label{b02c}
	X_\infty^{d,\theta}(t) =\xi_d+\int_{0}^{t}\E \!\left[\mu_d(x,X_\infty^{d,\theta}(s))\right]\!|_{x=X_\infty^{d,\theta}(s)}\,ds
	+
	\int_{0}^{t}\E \!\left[\sigma_d(x,X_\infty^{d,\theta}(s))\right]\!|_{x=X_\infty^{d,\theta}(s)}\,dW^{d,\theta}(s).
\end{align}
\item \label{c19}
There exist $(\mathfrak{C}_\delta)_{\delta\in (0,1)}\subseteq \R$,
$(\mathsf{n}_{d,\epsilon})_{d\in \N,\epsilon\in (0,1)}\subseteq \N$ such that for all $d\in \N$, $\delta,\epsilon\in (0,1)$ we have that
\begin{align}
	\label{c19c}
\sup_{k\in [\mathsf{n}_{d,\epsilon},\infty)\cap\Z}
\sup _{t\in [0,T]}
\left\lVert
X^{d,0}_{k,k,k^k}(t)-X^{d,0}_\infty(t)
\right\rVert_{L^2(\P;\R^d)}<\epsilon
\end{align}
and 
\begin{align}
	\label{c19d}
\mathsf{C}_{\mathsf{n}_{d,\epsilon},\mathsf{n}_{d,\epsilon}, (\mathsf{n}_{d,\epsilon})^{\mathsf{n}_{d,\epsilon}}}^d<(d^{c+1})^6\mathfrak{C}_\delta\epsilon^{-(4+\delta)}.
\end{align}
\end{enumerate}
\end{theorem}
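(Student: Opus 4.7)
I treat claims (i) and (ii) separately. For (i), I propose a Picard fixed-point argument on the Banach space of continuous $(\F_t)$-adapted processes $Y\colon [0,T]\times \Omega\to \R^d$ with $\sup_{t\in [0,T]}\lVert Y(t)\rVert_{L^2(\P;\R^d)}<\infty$, equipped with the exponentially weighted norm $\sup_{t\in [0,T]} e^{-\lambda t}\lVert Y(t)\rVert_{L^2(\P;\R^d)}$. The map sending $Y$ to the strong solution of the SDE with time-dependent drift $x\mapsto \E[\mu_d(x,Y(t))]$, diffusion $x\mapsto \E[\sigma_d(x,Y(t))]$ and initial value $\xi_d$ is well defined, since by \eqref{a11b} these coefficients are deterministic and Lipschitz in $x$ with constant $c/2$. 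It\^o's isometry on the diffusion term and Cauchy--Schwarz on the drift term then make the map a strict contraction once $\lambda$ is sufficiently large, and its unique fixed point is $X_\infty^{d,\theta}$.

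For (ii), I follow the MLP blueprint of Hutzenthaler--Jentzen adapted to the McKean--Vlasov setting with nonconstant diffusion. First I show inductively in $n$, using \eqref{a11b}--\eqref{a11c}, It\^o's isometry, and the independence structure of the tree $\Theta$, the uniform moment bound $\sup_{n,m,K}\sup_t \lVert X_{n,m,K}^{d,\theta}(t)\rVert_{L^2(\P;\R^d)}\le \bar C d^{\bar c}$ with constants depending only on $c,T$. Then, with $\mathcal{E}_{n,m,K}(t):=\lVert X_{n,m,K}^{d,\theta}(t)-X_\infty^{d,\theta}(t)\rVert_{L^2(\P;\R^d)}^2$, I subtract the integral form of \eqref{b02c} from \eqref{a05c}, square, and exploit (a) that the $m^{n-\ell}$ copies indexed by $(\theta,n,k,\ell)$ at level $\ell$ are conditionally i.i.d.\ and mean zero after telescope subtraction, reducing variance by $1/m^{n-\ell}$; (b) It\^o's isometry on the $\sigma$-telescoping; (c) Cauchy--Schwarz on the $\mu$-telescoping; and (d) the $L^2$-time continuity of $X_\infty^{d,\theta}$ to control the $\rdown{\cdot}_K$ discretization. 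The resulting recursion
\begin{align*}
\mathcal{E}_{n,m,K}(t) \le C_1\int_0^t \sum_{\ell=1}^{n-1} \frac{\mathcal{E}_{\ell,m,K}(s)+\mathcal{E}_{\ell-1,m,K}(s)}{m^{n-\ell}}\,ds + C_2\frac{d^{2c}T}{K} + C_3 d^{2c}\frac{(cT)^n}{n!}
\end{align*}
can be solved inductively (cf.\ \cite{HKN2022}) to give $\sup_t \mathcal{E}_{n,m,K}(t)\le C d^{2c}\bigl((cT)^n/n! + T/K\bigr)$.

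For the cost side, from the recursive structure of \eqref{a05c} together with \eqref{c10} I obtain
\begin{align*}
\mathsf{C}_{n,m,K}^d \le K\mathsf{cost}_\mathsf{rv} + \sum_{\ell=1}^{n-1} m^{n-\ell}\bigl[\mathsf{C}_{\ell,m,K}^d + \mathsf{C}_{\ell-1,m,K}^d + K(\mathsf{cost}_{\mu_d}+\mathsf{cost}_{\sigma_d}+\mathsf{cost}_\mathsf{rv})\bigr],
\end{align*}
which by induction yields $\mathsf{C}_{n,m,K}^d\le c d^{c} K(3m)^n$. Finally I pick $\mathsf{n}_{d,\epsilon}=k$ as the smallest integer for which the error bound at $n=m=k,K=k^k$ falls below $\epsilon^2$; Stirling's formula gives $k\lesssim \log(d^c/\epsilon)/\log\log(d^c/\epsilon)$, and substituting into the cost $k^k(3k)^k$ produces, after absorbing the super-logarithmic growth via the elementary bound $k^{2k}\le \mathfrak{C}_\delta \epsilon^{-\delta}$ (with $\mathfrak{C}_\delta\uparrow\infty$ as $\delta\downarrow 0$), the claimed estimate \eqref{c19d}. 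The factor $\epsilon^{-4}$ in \eqref{c19d} reflects exactly the interplay $K=k^k\sim \epsilon^{-2}$ needed to kill the time-discretization term $T/K$ from the nonconstant-diffusion Euler error, combined with a further factor $(3m)^n\sim m^{n}\sim k^{k}\sim \epsilon^{-2}$ from the MLP sample budget.

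The main obstacle is the nonconstant diffusion. In the constant-$\sigma$ analysis of \cite{HKN2022} the stochastic integral in the MLP scheme collapses to a single linear term $\sigma_d W^{d,\theta}(t)$, so no MLP variance reduction is required on the diffusion side. With nonconstant $\sigma_d$, step (b) above must square a genuine stochastic integral via It\^o's isometry, and I must verify that the $m^{n-\ell}$ independent copies at level $\ell$ contribute orthogonally in the $L^2(\Omega;\R^d)$-expansion of that square so that the variance still decays as $1/m^{n-\ell}$. This rests on careful filtration bookkeeping: the copies $W^{d,(\theta,n,k,\ell)}$ appear only inside arguments of $\sigma_d$ (never as integrators), $W^{d,\theta}$ is the unique integrator, and all $\unif^{(\theta,n,k,\ell)}$ are independent of both. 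Once these measurability and independence issues are settled, the remaining arguments become a systematic extension of the constant-diffusion MLP analysis to the McKean--Vlasov setting.
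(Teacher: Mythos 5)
Your proposal follows essentially the same blueprint as the paper: a Picard fixed--point argument for part~(i), and for part~(ii) a bias--variance decomposition of the MLP error (conditional expectation given the previous level as the deterministic bias = one Picard step, Bienaym\'e and conditional i.i.d.\ structure for the $1/m^{n-\ell}$ variance reduction, It\^o isometry on the stochastic integral, time continuity of the solution to control the $\rdown{\cdot}_K$ rounding), plus a cost recursion and a final complexity step where a constant $\mathfrak{C}_\delta$ absorbs the sub-$\epsilon^{-\delta}$ overhead. The key ideas match and the target exponent $4+\delta$ is derived for the same reason you give.

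There is, however, one structural choice where you deviate from the paper and where your stated intermediate bound becomes imprecise. You compare $X^{d,0}_{n,m,K}$ \emph{directly} to $X^{d,0}_\infty$, so the rounding error $T/K$ enters the error recursion at \emph{every} level, and after $n$ iterations it acquires a multiplicative factor of the form $C^{n}$ (with $C$ depending on $c,T$). Your claimed solved form $\sup_t\mathcal{E}_{n,m,K}(t)\le Cd^{2c}\bigl((cT)^n/n!+T/K\bigr)$ therefore cannot be right as written: either the $T/K$ term needs an additional $C^n$ prefactor (harmless for the final complexity because $K=k^k$ grows super-exponentially, but it must be tracked), or one must first compare to the time-discretized McKean--Vlasov solution $X^{d,0}_K$ and add the one-shot discretization error $\lVert X^{d,0}_K-X^{d,0}_\infty\rVert$ separately, which is what the paper does. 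The paper's route is cleaner precisely because $X_K$ is the fixed point of the conditional-expectation (Picard) map implicit in \eqref{a05c}, so the bias recursion closes with no extra $T/K$ source term and no $C^n$ amplification. Relatedly, your solved bound has no $m$ dependence; the correct general-$m$ version carries a factor like $e^{m/2}/m^{n/2}$ (which coincides with an $n!$-type decay only after the choice $m=n$). These are fixable imprecisions rather than failures of the method, but the statement of the recursion and its closed form should be corrected accordingly. Two smaller points: the cost recursion should pick up a factor $d$ from the $Kd$ scalar draws needed for each new Brownian path (giving $\mathsf C^d_{n,m,K}\lesssim d^{c+1}K(8m)^n$ rather than $d^cK(3m)^n$, though the complexity exponent is unaffected), and your final phrase ``$k^{2k}\le\mathfrak{C}_\delta\epsilon^{-\delta}$'' mislabels the main $\epsilon^{-4}$ contribution as a sub-polynomial correction; the actual bookkeeping is that the cost at level $\mathsf n_{d,\epsilon}$ times $(\text{error at level }\mathsf n_{d,\epsilon}-1)^{4+\delta}$ is bounded by a constant, and the minimality of $\mathsf n_{d,\epsilon}$ gives error at the previous level $\ge\epsilon$.
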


\begin{remark}\label{x01}
	Note that the McKean--Vlasov SDEs \eqref{b02c} can be equivalently written using the laws of $(X_\infty^{d,\theta}(s))_{s \in [0,T]}$, instead of the expectation. More precisely, for every fixed $d \in \N$, we denote by $P_2(\R^d)$ the set of probability measures $\nu$ on $\R^d$ satisfying $\int_{\R^d} x^2 \nu(dx) < \infty$ and define the functions $\R^d \times P_2(\R^d) \ni (x,\nu) \mapsto \widetilde{\mu}_d(x,\nu) := \int_{\R^d} \mu_d(x,y) \nu(dy) \in \R^d$ and $\R^d \times P_2(\R^d) \ni (x,\nu) \mapsto \widetilde{\sigma}_d(x,\nu) := \int_{\R^d} \sigma_d(x,y) \nu(dy) \in \R^{d \times d}$. Then, it holds $\P$-a.s.\ for all $\theta \in \Theta$, $t\in [0,T]$ that
	\begin{equation*}
		\begin{aligned}
			X_\infty^{d,\theta}(t)&=\xi_d+\int_{0}^{t}\E \!\left[\mu_d(x,X_\infty^{d,\theta}(s))\right]\!|_{x=X_\infty^{d,\theta}(s)}\,ds
			+
			\int_{0}^{t}\E \!\left[\sigma_d(x,X_\infty^{d,\theta}(s))\right]\!|_{x=X_\infty^{d,\theta}(s)}\,dW^{d,\theta}(s) \\
			&=\xi_d+\int_{0}^{t} \widetilde{\mu}_d(X_\infty^{d,\theta}(s),\nu_{X_\infty^{d,\theta}(s)}) ds
			+
			\int_{0}^{t} \widetilde{\sigma}_d(X_\infty^{d,\theta}(s),\nu_{X_\infty^{d,\theta}(s)})\,dW^{d,\theta}(s),
		\end{aligned}
	\end{equation*}
	where $\nu_{X_\infty^{d,\theta}(s)} \in P_2(\R^d)$ denotes the law of the random variable $X_\infty^{d,\theta}(s)$, which is defined as $\nu_{X_\infty^{d,\theta}(s)}[A] := \P[X_\infty^{d,\theta}(s) \in A]$ for all Borel-subsets $A \subseteq \R^d$.
\end{remark}
\begin{remark}\label{r05}
Under the assumptions of Theorem~\ref{c01}, 
for all $\theta\in \Theta$,
$d,K,m,n\in \N$, 
$\ell\in[1,n-1]\cap\Z $, $k\in [1,m^{n-\ell}]\cap\Z$,
$t\in [0,T]$, $J\in [0,K-1]\cap\Z$ with
$t=\frac{JT}{K}$ we have
 that 
\begin{align}
&
\int_{0}^{t}\left[\sigma_d(X_{\ell,m,K}^{d,\theta} (\rdown{s}_K ),X_{\ell,m,K}^{d,(\theta,n,k,\ell)} (\rdown{s}_K ))
-\sigma_d(X_{\ell-1,m,K}^{d,\theta} (\rdown{s}_K ),X_{\ell-1,m,K}^{d,(\theta,n,k,\ell)} (\rdown{s}_K )
\right]
dW^{d,\theta}(s)\nonumber\\
&=
\sum_{j=0}^{J-1}\left[
\sigma_d(X_{\ell,m,K}^{d,\theta} (\tfrac{jT}{K} ),X_{\ell,m,K}^{d,(\theta,n,k,\ell)} 
(\tfrac{jT}{K}))
-
\sigma_d(X_{\ell-1,m,K}^{d,\theta} (\tfrac{jT}{K} ),X_{\ell-1,m,K}^{d,(\theta,n,k,\ell)} (\tfrac{jT}{K}))\right]
(W^{d,\theta}(\tfrac{(j+1)T}{K})-W^{d,\theta}(\tfrac{jT}{K}) ).
\end{align}
Thus, we see that the stochastic integral in \eqref{a05c} is time-discretized and hence implementable.
\end{remark}%
Theorem~\ref{c01} proves mathematically that the multilevel Picard (MLP) approximation \eqref{a05c} overcomes the curse of dimensionality when approximating the McKean--Vlasov SDE \eqref{b02c}. Indeed, by \eqref{c19c} and \eqref{c19d}, the computational cost $\mathsf{C}_{n,m,K}^d$ of the MLP algorithm grows at most polynomially in both the state-space dimension $d$ and the reciprocal $\epsilon^{-1}$ of the prescribed $L^2$-error tolerance $\epsilon$.

Let us give some motivations for the numerical schema \eqref{a05c}. First, we start with an iteration defined as follows $X^d_{0}(t)=0$ and
\begin{align}
X^d_n(t) &= \xi_d+\int_{0}^{t}\E \!\left[  \mu_d(x,X^d_{n-1}(s))\right]\!|_{x=X^d_{n-1}(s)}\,ds
+ \int_{0}^{t}\E \!\left[\sigma_d(x,X^d_{n-1}(s))\right]\!|_{x=X^d_{n-1}(s)}\,dW^d(s)
\end{align} for $n\in \N$. Note that the r.h.s. can be written as a telescoping sum: 
\begin{align}
 X^d_n(t)   &=
\xi_d+t\mu_d(0,0)+\sigma_d(0,0)W^{d,\theta}(t)\nonumber\\&\quad +\int_{0}^{t}\sum_{\ell=1}^{n-1}\E \!\left[  \mu_d(x,X^d_{\ell}(s))\right]\!|_{x=X^d_{\ell}(s)}-
\E \!\left[  \mu_d(x,X^d_{\ell-1}(s))\right]\!|_{x=X^d_{\ell-1}(s)}
\,ds\nonumber\\
&\quad 
+ \int_{0}^{t}\sum_{\ell=1}^{n-1}\E \!\left[\sigma_d(x,X^d_{\ell}(s))\right]\!|_{x=X^d_{\ell}(s)}
-\E \!\left[\sigma_d(x,X^d_{\ell-1}(s))\right]\!|_{x=X^d_{\ell-1}(s)}
\,dW^d(s)
\end{align} where the index $\ell$ can be interpreted as the corresponding level.
 By replacing here all expectations and integrals by corresponding Monte-Carlo approximations
 we derive our multilevel Picard approximation scheme \eqref{a05c}. More precisely, the ideas can be describe as follows. First, we approximate an integral $\int_0^t h(s)\,ds$ for a random function $h$ by 
 $th(\mathfrak{u})$ where $\mathfrak{u}$ is uniformly distributed on $[0,1]$ and independent of $h$.  
 In order to approximate the expectations
 $
 \E \!\left[  \mu_d(x,X^d_{\ell-1}(s))\right]\!|_{x=X^d_{\ell-1}(s)}$ and
 $
 \E \!\left[  \sigma_d(x,X^d_{\ell-1}(s))\right]\!|_{x=X^d_{\ell-1}(s)}$
 we take 
 one sample $X^{d,\theta}_{\ell,m,K}$, which is also generated by the Brownian motion $W^{d,\theta}$, and
 $m^{n-\ell}$ independent samples $X^{d,(\theta,n,k,\ell)}$, $k\in \{1,2,\ldots,m^{n-\ell}\}$, which are respectively generated by the independent Brownian motions $W^{d,(\theta,n,k,\ell)}$, $k\in \{1,2,\ldots,m^{n-\ell}\}$. 
% The above two ideas are not new
%and has been already implemented in \cite[Theorem~1.1]{HKN2022} -- the case where the matrix $\sigma_d$ is identity. In other words, in the case $\sigma_d$ is identity, \eqref{a05c} is exactly the numerical approximation introduced in \cite[Theorem~1.1]{HKN2022} with $K=m^n$.
% The main novelty of our paper compared to \cite[Theorem~1.1]{HKN2022} is that in order to approximate the stochastic integrals we need to save the trajectory of the approximation solutions and the corresponding Brownian motions. 
The main novelty of the numerical scheme in \eqref{a05c} in  Theorem~\ref{c01} compared to \cite[Theorem~1.1]{HKN2022} lies in storing the entire process $(X_{n,m,K}^{d,\theta}(t))_{t\in [0,T]}$ rather than only one %space-time 
random variable $X_{n,m,K}^{d,\theta}(t)$. Indeed, by retaining the whole path $(X_{n,m,K}^{d,\theta}(t))_{t\in [0,T]}$, we can approximate the stochastic integral arising in McKean--Vlasov SDEs with nonconstant diffusion coefficient, which was not necessary for the constant diffusion case in \cite{HKN2022} where the stochastic integral is just a normally distributed random variable.
Since we cannot save a process $Y$ at all time points 
$t\in [0,T]$ we only save the discrete trajectory $(Y(\rdown{t}_K))_{t\in [0,T]}$.
Here and in \eqref{a05c} the index $K$ denote the number of steps of the time discretization to approximate the stochastic integrals. 
While this approach may seem inefficient at first sight, our numerical experiments demonstrate its effectiveness even in high dimensions. Moreover, for many applications of McKean--Vlasov SDEs, one requires the computation of the entire solution trajectory $(X^{d,\theta}_{\infty}(t))_{t\in [0,T]}$ instead of one single space-time point $X^{d,\theta}_{\infty}(t)$, which our method can do.
\begin{remark}\label{r01} In Theorem~\ref{c01}, the higher complexity of order $\epsilon^{-(4+\delta)}$ compared to $\epsilon^{-(2+\delta)}$ in the case when $\sigma$ is constant is due to the fact that we have to approximate the stochastic integrals and therefore need to save the whole trajectory of the approximation solution. As seen in \eqref{c04}, the $L^2$ error consists of two parts: an MLP error (the first term)
and a discretization error (the second term). These two errors should be balanced. This leads to the choice $m=n$ and $K=n^n$. As a result, we obtain a computational complexity of order $\epsilon^{-(4+\delta)}$.  We believe that this computational complexity is not sharp. As a future research we would like to design an algorithm which can overcome the curse of dimensionality with computational complexity of order $\epsilon^{-(2+\delta)}$, the same computational complexity as in the case $\sigma$ is constant. 
\end{remark}%
\begin{remark}\label{r04} In Theorem~\ref{c01} the $dt$-integral can also be approximated by time discretization instead of uniform Monte-Carlo sampling. It can be  seen by slightly adapting the proof that the computational complexity is still of order $\epsilon^{-(4+\delta)}$. 
However, in practice,  the algorithm runs faster when we use uniform Monte-Carlo sampling because it does not need to evaluate $\mu_d$ along the whole discrete trajectory but only one random point on it. In the time integral we need to introduce the rounding because we cannot save the whole trajectory of the approximation solution
$(X_{n,m,K}(t))_{t\in [0,T]}$ but only its discrete part 
$(X_{n,m,K}(kT/K))_{k\in [0,K]\cap\mathbb{Z}}$.
\end{remark}%
\begin{remark}
	\label{RemLowLevels}
	Note that the MLP scheme~\eqref{a05c} outputs only a single trajectory approximating the solution to the McKean-Vlasov SDE~\eqref{b02c}. However, the lower levels $X_{\ell,m,K}^{d,(\theta,n,k,\ell)}$, $k \in \lbrace 1,...,m^{n-\ell} \rbrace$ and $\ell \in \lbrace 1,...,n-1 \rbrace$, of the MLP algorithm can be used to approximate additional statistical quantities. For example, we use the lower levels in the numerical example of Section~\ref{SecKuramoto} to approximate the expectation in the drift coefficient of the McKean-Vlasov SDE~\eqref{b02c}.
\end{remark}
The remainder of this article is organized as follows. In Section~\ref{SecNumerics}, we provide two numerical examples to approximate the solution of \eqref{EqMcKV} in dimension up to 1000, followed by the conclusion in Section~\ref{SecConclusion}. In Section~\ref{SecPicard}, we show an auxiliary result on Picard iterations of the McKean--Vlasov SDE \eqref{EqMcKV}, while Section~\ref{SecMLP} contains the proof of the main result in Theorem~\ref{c01}.

\section{Numerical experiments}
\label{SecNumerics}

In this section, we provide two numerical examples\footnote{The numerical experiments have been implemented in \texttt{Python} and executed on a HPC cluster of D-MATH, ETH Zurich. The code is available under the following link: \url{https://github.com/psc25/McKeanVlasovMLP}.} to illustrate how the solution of the McKean-Vlasov SDE \eqref{EqMcKV} can be approximated by the multilevel Picard (MLP) scheme \eqref{a05c}. First, let us summarize the MLP scheme~\eqref{a05c} as pseudocode in Algorithm~\ref{AlgMLP}.

\begin{algorithm}[!ht]
	\DontPrintSemicolon
	\begin{small}
		\KwInput{$K,m \in \mathbb{N}$, $n \in \mathbb{N}_0$, $T > 0$, $\xi_d \in \R^d$, $\mu^d \in C(\R^d \times \R^d,\R^{1 \times d})$, and $\sigma^d \in C(\R^d \times \R^d,\R^{d \times d})$.}
		\KwOutput{MLP approximation $X = (X_{k,i})_{k=0,...,K}^{i=1,...,d}$ of the solution to \eqref{b02c} at times $(\frac{kT}{K})_{k=0,...,K}$.}
		
		\SetKwProg{Fn}{Function}{}{end}
		
		$\Delta t \leftarrow T/K$
		
		$\mathbf{t}_K \leftarrow (k \Delta j)_{j=0,...,K} \in \R^{(K+1) \times 1}$
		
		Generate $\R^{K \times d}$-valued realization $\Delta W \sim \mathcal{N}_{K \times d}(0,\sqrt{\Delta t})$.
		
		\Fn{\texttt{MLP}($\xi_d := (\xi_d^i)_{i=1,...,d} \in \R^{1 \times d},m \in \mathbb{N},n \in \mathbb{N}_0,\Delta W := (\Delta W_{j,i})_{j=1,...,K}^{i=1,...,d} \in \R^{K \times d}$)}{
			
			\If{$n = 0$}{
				\KwRet $0 \in \R^{(K+1) \times d}$ 
			}
			
			\Else{
				$X := (X_j)_{j=0,...,K} := (X_{j,i})_{j=0,...,K}^{i=1,...,d} \leftarrow (\xi_d^i)_{j=0,...,K}^{i=1,...,d} \in \R^{(K+1) \times d}$
				
				$X \leftarrow X + \mathbf{t}_K \mu^d(0, 0) \in \R^{(K+1) \times d}$
				
				$X \leftarrow X + \big( \sum_{k=1}^j \Delta W_{k,i} \big)_{j=0,...,K}^{i=1,...,d} \sigma^d(0,0)^\top \in \R^{(K+1) \times d}$
				
				\For{$l = 1,...,n-1$}{
					\For{$k = 1,...,m^{n-l}$}{
						Generate $\R^{K \times d}$-valued realization $\Delta \widetilde{W} := (\Delta \widetilde{W}_{j,i})_{j=1,...,K}^{i=1,...,d} \sim \mathcal{N}_{K \times d}(0,\sqrt{\Delta t})$.
						
						$X^{(1)} := (X^{(1)}_j)_{j=0,...,K} := (X^{(1)}_{j,i})_{j=0,...,K}^{i=1,...,d} \leftarrow \texttt{MLP}(\xi, l, \Delta W)$
						
						$X^{(2)} := (X^{(2)}_j)_{j=0,...,K} := (X^{(2)}_{j,i})_{j=0,...,K}^{i=1,...,d} \leftarrow \texttt{MLP}(\xi, l, \Delta \widetilde{W})$
						
						$X^{(3)} := (X^{(3)}_j)_{j=0,...,K} := (X^{(3)}_{j,i})_{j=0,...,K}^{i=1,...,d} \leftarrow \texttt{MLP}(\xi, l-1, \Delta W)$
						
						$X^{(4)} := (X^{(4)}_j)_{j=0,...,K} := (X^{(4)}_{j,i})_{j=0,...,K}^{i=1,...,d} \leftarrow \texttt{MLP}(\xi, l-1, \Delta \widetilde{W})$
						
						$I := (I_j)_{j=0,...,K} \leftarrow 0 \in \R^{(K+1) \times d}$
						
						\For{$j = 1,...,K$}{
							$I_j \leftarrow I_{j-1} + \frac{1}{m^{n-l}} \big( \sigma^d(X^{(1)}_{j-1},X^{(2)}_{j-1}) - \sigma^d(X^{(3)}_{j-1},X^{(4)}_{j-1}) \big) \Delta W_j \in \R^d$
						}
						
						$X \leftarrow X + I \in \R^{(K+1) \times d}$
						
						Generate a real-valued realization $u \sim \mathcal{U}(0,1)$.
						
						\For{$j = 0,...,K$}{
							$r \leftarrow \lfloor (j-1) u \rfloor+1 \in \R$
							
							$X_j \leftarrow X_j + \frac{1}{m^{n-l}} \big( \mu^d(X^{(1)}_r,X^{(2)}_r) - \mu^d(X^{(3)}_r,X^{(4)}_r) \big) \in \R^d$
						}
					}
				}
				\KwRet $X \in \R^{(K+1) \times d}$ 
			}
		}
		
		$X \leftarrow \texttt{MLP}(\xi_d,m,n,\Delta W) \in \R^{(K+1) \times d}$
		
		\KwRet $X \in \R^{(K+1) \times d}$ 
	\end{small}
	\caption{MLP algorithm for McKean-Vlasov stochastic differential equations}
	\label{AlgMLP}
\end{algorithm}

\subsection{Mean-field Ornstein-Uhlenbeck model}

In the first example, we consider an Ornstein-Uhlenbeck process with a mean-field term in the drift and diffusion coefficient (see \cite[Section~5.3]{AO2023}). More precisely, we assume that the $(\mathbb{F}_t)_{t \in [0,T]}$-adapted process $X^{d,\theta}_\infty = (X^{d,\theta}_\infty(t))_{t \in [0,T]}$ for every $t \in [0,T]$ is given by
\begin{equation}
	\label{EqDefMfOU}
	\begin{aligned}
		X^{d,\theta}_\infty(t) & = \xi_d + \int_0^t \left( a_0 + A_1 X^{d,\theta}_\infty(s) + A_2 \E\!\left[ X^{d,\theta}_\infty(s) \right] \right) ds + \sum_{k=1}^d \int_0^t \left( b_k + B_k \E\!\left[ X^{d,\theta}_\infty(s) \right] \right) dW^{d,\theta,k}(s),
	\end{aligned}
\end{equation}
where $\xi_d \in \R^d$ is the initial value, where $a_0,b_1,...,b_d \in \R^d$ are vectors, and where $A_1, A_2, B_1,...,B_d \in \R^{d \times d}$ are matrices. Comparing to Theorem~\ref{c01}, this corresponds to the drift $\mu_d(x_1,x_2) = a_0 + A_1 x_1 + A_2 x_2$ and diffusion coefficient $\sigma_d(x_1,x_2) = (b_k+B_k x_2)_{k=1,...,d}$ satisfying \eqref{a11b} and \eqref{a11c}. Moreover, by taking the expectation in \eqref{EqDefMfOU} and using Fubini's theorem, we have
\begin{equation*}
	\E\!\left[ X^{d,\theta}_\infty(t) \right] = \xi_d + a_0 t + (A_1 + A_2) \int_0^t \E\!\left[ X^{d,\theta}_\infty(s) \right] ds,
\end{equation*}
whose solution is given by
\begin{equation}
	\label{EqMeanMfOU}
	m_d(t) := \E\!\left[ X^{d,\theta}_\infty(t) \right] = e^{(A_1+A_2)t} \xi_d + \int_0^t e^{(A_1+A_2)(t-s)} a_0 ds.
\end{equation}
For different dimensions $d \in \lbrace 10, 50, 100, 500, 1000 \rbrace$ and levels $n = m \in \lbrace 1,...,5 \rbrace$ with $K = m^n$, we run the algorithm $10$ times with terminal time $T = 1$, initial value $\xi_d = (20,...,20)^\top$, and randomly initialized parameters that are fixed over the different levels and runs, i.e.,
\begin{align*}
	a & \sim \mathcal{N}_d\big( 0,(20\sqrt{d})^{-1} I_d \big), & & & \vech(A_k) & \sim \mathcal{N}_{d^2}\big( 0,(100d)^{-1} I_{d^2} \big) & & k \in \lbrace 1,2 \rbrace \\
	b_k & \sim \mathcal{N}_d\big( 0,(5d)^{-1} I_d \big), & & k \in \lbrace 1,...,d \rbrace, \quad & \vech(B_k) & \sim \mathcal{N}_{d^2}\big( 0,(5d)^{-1} I_{d^2} \big) & & k \in \lbrace 1,...,d \rbrace,
\end{align*}
which are all independent of each other, where $\vech(A) := (a_{1,1},a_{1,2},...,a_{d,d})^\top \in \R^{d^2}$ for $A := (a_{i,j})_{i=1,...,d}^{j=1,...,d}$, where $\mathcal{N}_d(0,\Sigma)$ denotes the normal distribution with zero mean and covariance matrix $\Sigma \in \R^{d \times d}$, and where $I_d \in \R^{d \times d}$ is the identity matrix. The results are reported in Table~\ref{TabMfOU} with ($d^{-1/2}$-adjusted) $L^2$-error given by
\begin{equation}
	\label{EqDefL2Err}
	\left( \frac{1}{10 d K} \sum_{r=1}^{10} \sum_{k=1}^K \left\Vert X^{d,\theta}_\infty(\omega_r)(t_k) - X^{d,\theta}_{n,m,K}(\omega_r)(t_k) \right\Vert^2 \right)^\frac{1}{2}.
\end{equation}
Here, the true solution $X^{d,\theta}_\infty$ is approximated by an Euler-Maruyama scheme of \eqref{EqDefMfOU} over a finer equidistant time grid $(j T/K_1)_{j=0,...,K_1}$ with $K_1 := K \lfloor\frac{500}{K} \rfloor + 1$ points. The realizations of the Brownian increments $\big(\Delta W^{d,\theta,k}(j T/K_1)\big)_{j=0,\dots,K_1}$ on this finer Euler-Maruyama grid are then summed over each time subinterval of the coarser MLP grid to obtain the Brownian increments $\big(\Delta W^{d,\theta,k}(j T/K)\big)_{j=0,\dots,K}$ used in the MLP approximation.

\begin{table}[h!]
	\begin{small}
		\begin{tabular}{rl|R{2.3cm}R{2.3cm}R{2.3cm}R{2.3cm}R{2.3cm}|}
			& & \multicolumn{5}{c|}{Level $n$, samples per level $m$, and time points $K = m^n$} \\
			$d$ & & $n = m = 1$ & $n = m = 2$ & $n = m = 3$ & $n = m = 4$ & $n = m = 5$ \\
			\hline
			10 & $L^2$-Error  & 2.8863 & 0.2909 & 0.0709 & 0.0149 & 0.0005 \\ 
 & Time  & $0.0001$  & $0.0011$  & $0.0562$  & $10.6896$  & $4082.6706$  \\ 
 & Cost  & $2.48 \cdot 10^{3}$  & $5.21 \cdot 10^{4}$  & $4.19 \cdot 10^{6}$  & $8.43 \cdot 10^{8}$  & $3.05 \cdot 10^{11}$  \\ 
\hline 
50 & $L^2$-Error  & 2.9584 & 0.3680 & 0.0799 & 0.0185 & 0.0003 \\ 
 & Time  & $0.0002$  & $0.0014$  & $0.0693$  & $13.5308$  & $5088.0725$  \\ 
 & Cost  & $2.60 \cdot 10^{5}$  & $5.47 \cdot 10^{6}$  & $4.39 \cdot 10^{8}$  & $8.84 \cdot 10^{10}$  & $3.20 \cdot 10^{13}$  \\ 
\hline 
100 & $L^2$-Error  & 2.8464 & 0.3654 & 0.0817 & 0.0176 & 0.0003 \\ 
 & Time  & $0.0075$  & $0.0057$  & $0.2641$  & $50.0334$  & $11458.5722$  \\ 
 & Cost  & $2.04 \cdot 10^{6}$  & $4.29 \cdot 10^{7}$  & $3.44 \cdot 10^{9}$  & $6.93 \cdot 10^{11}$  & $2.50 \cdot 10^{14}$  \\ 
\hline 
500 & $L^2$-Error  & 2.8415 & 0.3557 & 0.0814 & 0.0180 & 0.0004 \\ 
 & Time  & $0.0196$  & $0.1250$  & $6.4389$  & $1477.7328$  & $229473.3886$  \\ 
 & Cost  & $2.51 \cdot 10^{8}$  & $5.27 \cdot 10^{9}$  & $4.23 \cdot 10^{11}$  & $8.52 \cdot 10^{13}$  & $3.08 \cdot 10^{15}$  \\ 
\hline 
1000 & $L^2$-Error  & 2.8466 & 0.3482 & 0.0829 & 0.0178 & 0.0003 \\ 
 & Time  & $0.0582$  & $0.5611$  & $41.3946$  & $7831.0792$  & $1216612.0450$  \\ 
 & Cost  & $2.00 \cdot 10^{9}$  & $4.21 \cdot 10^{10}$  & $3.38 \cdot 10^{12}$  & $6.80 \cdot 10^{14}$  & $2.46 \cdot 10^{16}$  \\ 
\hline 

		\end{tabular}
		\caption{MLP solution of the mean-field Ornstein-Uhlenbeck model \eqref{EqDefMfOU} for different $d \in \lbrace 10, 50, 100, 500, 1000 \rbrace$, $n = m \in \lbrace 1,...,5 \rbrace$, and $10$ independent runs of the algorithm. While the $L^2$-error \eqref{EqDefL2Err} is displayed in the rows ``$L^2$-Error'', the rows ``Time'' and ``Cost'' report the computational times (in seconds) and the computational costs $\mathsf{C}_{n,m,K}^d$, respectively, which are both averaged over the $10$ runs.}
		\label{TabMfOU}
	\end{small}
\end{table}

\subsection{Multidimensional geometric Kuramoto model}
\label{SecKuramoto}

In the second example, we consider a multidimensional geometric version of the Kuramoto model, whose original model has been studied in \cite{S2000,ABPRS05,CGPS2020,dRST2023} and the references therein. More precisely, we assume that the $(\mathbb{F}_t)_{t \in [0,T]}$-adapted process $X^{d,\theta}_\infty = (X^{d,\theta}_\infty(t))_{t \in [0,T]}$ for every $t \in [0,T]$ is given by
\begin{equation}
	\label{EqDefKuramoto}
	X^{d,\theta}_\infty(t) = \xi_d + \mu_0 \int_0^t \E\!\left[ \sin_d\left( x - X^{d,\theta}_\infty(s) \right) \right]\!|_{x = X^{d,\theta}_\infty(s)} ds + \sum_{k=1}^d \int_0^t \Sigma_k X^{d,\theta}_\infty(s) dW^{d,\theta,k}(s),
\end{equation}
where $\xi_d \in \R^d$ is the initial value, where $\mu_0 \in \R$ and $\Sigma_1,...,\Sigma_d \in \R^{d \times d}$ are some parameters, and where $\R^d \ni x = (x_1,...,x_d)^\top \mapsto \sin_d(x) := (\sin(x_1),...,\sin(x_d))^\top \in \R^d$ denotes the componentwise sine function. Comparing to Theorem~\ref{c01}, this corresponds to the drift $\mu_d(x_1,x_2) = \mu_0 \sin_d(x_1-x_2)$ and diffusion coefficient $\sigma_d(x_1,x_2) = (\Sigma_k x_1)_{k=1,...,d}$ satisfying \eqref{a11b} and \eqref{a11c}. Moreover, we have
\begin{equation}
	\label{EqExpKuramoto}
	\E\!\left[ \sin_d\!\left( x \!-\! X^{d,\theta}_\infty(s) \right) \right] \!|_{x = X^{d,\theta}_\infty(s)} = \sin_d\!\left( X^{d,\theta}_\infty(s) \right) \E\!\left[ \cos_d\!\left( X^{d,\theta}_\infty(s) \right) \right] - \cos_d\!\left( X^{d,\theta}_\infty(s) \right) \E\!\left[ \sin_d\!\left( X^{d,\theta}_\infty(s) \right) \right].
\end{equation}
For different dimensions $d \in \lbrace 10, 50, 100, 500, 1000 \rbrace$ and levels $n = m \in \lbrace 1,...,5 \rbrace$ with $K = m^n$, we run the algorithm $10$ times with terminal time $T = 1$, initial value $\xi_d = (\xi_d^1,...,\xi_d^d)^\top = (10,...,10)^\top$, parameter $\mu_0 = 1$, and randomly initialized parameters $(\Sigma_k)_{k=1,...,d}$ that are fixed over the different levels and runs, i.e., samples from the independent random variables
	\begin{align*}
		\vech(\Sigma_k) \sim \mathcal{N}_{d^2}\big( 0, (10d)^{-1} I_{d^2} \big), \qquad k \in \lbrace 1,...,d \rbrace.
	\end{align*}
The results are reported in Table~\ref{TabKuramoto}, where the true solution $X^{d,\theta}_\infty = (X^{d,\theta}_\infty(t))_{t \in [0,T]}^\top$ is approximated by an Euler-Maruyama scheme of \eqref{EqDefKuramoto} over a finer equidistant time grid $(j T/K_1)_{j=0,...,K_1}$ with $K_1 := K \lfloor \frac{500}{K} \rfloor + 1$ points. For the drift term of \eqref{EqDefKuramoto}, the expectation~\eqref{EqExpKuramoto} is approximated by an average over the lower levels of the MLP algorithm, which are then extended from $(j T/K)_{j=0,...,K}$ to $(j T/K)_{j=0,...,K_1}$ by piecewise constant interpolation. Again, the realizations of the Brownian increments $\big(\Delta W^{d,\theta,k}(j T/K_1)\big)_{j=0,\dots,K_1}$ on the finer Euler-Maruyama grid are then summed over each time subinterval of the coarser MLP grid to obtain the Brownian increments $\big(\Delta W^{d,\theta,k}(j T/K)\big)_{j=0,\dots,K}$ used in the MLP approximation.

\begin{table}[h!]
	\begin{small}
		\begin{tabular}{rl|R{2.3cm}R{2.3cm}R{2.3cm}R{2.3cm}R{2.3cm}|}
			& & \multicolumn{5}{c|}{Level $n$, samples per level $m$, and time points $K = m^n$} \\
			$d$ & & $n = m = 1$ & $n = m = 2$ & $n = m = 3$ & $n = m = 4$ & $n = m = 5$ \\
			\hline
			10 & $L^2$-Error  & 0.9851 & 0.2834 & 0.1497 & 0.0725 & 0.0564 \\ 
 & Time  & $0.0001$  & $0.0010$  & $0.0462$  & $8.5473$  & $3369.9302$  \\ 
 & Cost  & $2.01 \cdot 10^{3}$  & $4.22 \cdot 10^{4}$  & $3.40 \cdot 10^{6}$  & $6.83 \cdot 10^{8}$  & $2.47 \cdot 10^{11}$  \\ 
\hline 
50 & $L^2$-Error  & 0.9968 & 0.2891 & 0.1623 & 0.0673 & 0.0585 \\ 
 & Time  & $0.0002$  & $0.0012$  & $0.0603$  & $11.5126$  & $3914.4276$  \\ 
 & Cost  & $2.48 \cdot 10^{5}$  & $5.21 \cdot 10^{6}$  & $4.18 \cdot 10^{8}$  & $8.42 \cdot 10^{10}$  & $3.04 \cdot 10^{13}$  \\ 
\hline 
100 & $L^2$-Error  & 0.9778 & 0.2904 & 0.1519 & 0.0696 & 0.0685 \\ 
 & Time  & $0.0025$  & $0.0021$  & $0.0964$  & $18.2436$  & $6410.4096$  \\ 
 & Cost  & $1.99 \cdot 10^{6}$  & $4.18 \cdot 10^{7}$  & $3.36 \cdot 10^{9}$  & $6.76 \cdot 10^{11}$  & $2.44 \cdot 10^{14}$  \\ 
\hline 
500 & $L^2$-Error  & 1.0143 & 0.2940 & 0.1566 & 0.0699 & 0.0687 \\ 
 & Time  & $0.0036$  & $0.0344$  & $2.3884$  & $483.1741$  & $154138.4544$  \\ 
 & Cost  & $2.50 \cdot 10^{8}$  & $5.24 \cdot 10^{9}$  & $4.21 \cdot 10^{11}$  & $8.48 \cdot 10^{13}$  & $1.53 \cdot 10^{16}$  \\ 
\hline 
1000 & $L^2$-Error  & 1.0102 & 0.2944 & 0.1562 & 0.0703 & 0.0694 \\ 
 & Time  & $0.0245$  & $0.3807$  & $28.8793$  & $5413.7144$  & $1698069.5603$  \\ 
 & Cost  & $2.00 \cdot 10^{9}$  & $4.20 \cdot 10^{10}$  & $3.37 \cdot 10^{12}$  & $6.78 \cdot 10^{14}$  & $1.22 \cdot 10^{17}$  \\ 
\hline 

		\end{tabular}
		\caption{MLP solution of the multidimensional geometric Kuramoto model \eqref{EqDefKuramoto} for different $d \in \lbrace 10, 50, 100, 500, 1000 \rbrace$, $n = m \in \lbrace 1,...,5 \rbrace$, and $10$ independent runs of the algorithm. While the $L^2$-error \eqref{EqDefL2Err} is displayed in the rows ``$L^2$-Error'', the rows ``Time'' and ``Cost'' report the computational times (in seconds) and the computational costs $\mathsf{C}_{n,m,K}^d$, respectively, which are both averaged over the $10$ runs.}
		\label{TabKuramoto}
	\end{small}
\end{table}

\vspace{0.5cm}
\section{Conclusion}
\label{SecConclusion}

Theorem~\ref{c01} proves mathematically that the multilevel Picard (MLP) approximation in \eqref{a05c} overcomes the curse of dimensionality when approximating the McKean--Vlasov stochastic differential equation (SDE) in \eqref{EqMcKV}. This means that the computational cost of the MLP algorithm grows at most polynomially in both the state-space dimension $d$ and the reciprocal $\epsilon^{-1}$ of the prescribed $L^2$-error tolerance $\epsilon$. Moreover, the two numerical examples demonstrate the practical applicability of the MLP algorithm for the approximation of high-dimensional McKean--Vlasov SDEs. In particular, Table~\ref{TabMfOU} and Table~\ref{TabKuramoto} confirm empirically that the MLP approximation overcomes the curse of dimensionality as the ($d^{-1/2}$-adjusted) $L^2$-error defined in \eqref{EqDefL2Err} does not increase significantly in the dimension $d$, while the computational cost seem to increase linearly in $d$.

\newpage
\section{An auxiliary lemma}
\label{SecPicard}
In this section, we show an auxiliary result for
solutions to McKean--Vlasov SDEs of the form~\eqref{EqMcKV} and its discretization which will be useful for the proof of Theorem~\ref{c01}.

\begin{lemma}\label{c12}
Let $T\in (0,\infty)$, 
$c\in [1,\infty)$,
$d\in\N $, 
$\xi_d\in \R^d$,
 $\mu 
\in C(\R^d\times \R^d,\R^d)$, $\sigma\in C(\R^d\times \R^d,\R^{d\times d})$ satisfy for all $x_1,x_2,y_1,y_2\in \R^d$ that
\begin{align}
\max \left\{
\left\lVert
\mu(x_1,x_2)-\mu(y_1,y_2)\right\rVert
,
\left\lVert
\sigma(x_1,x_2)-\sigma(y_1,y_2)\right\rVert
\right\}\leq 0.5c \lVert x_1-y_1\rVert+0.5c \lVert x_2-y_2\rVert.
\label{a11}
\end{align}
For every $K\in \N$ let $\rdown{\cdot}_K : [0,T] \rightarrow [0,T]$ satisfy for all $t\in (0,T]$ that 
$\rdown{t}_K=\sup( (0,t)\cap\{0,\frac{T}{K},\frac{2T}{K}\ldots,T\})$ and
$\rdown{0}_K=0$. For convenience we write for every $t\in [0,T]$ that  $\rdown{t}_\infty=t $.
Let $(\Omega,\mathcal{F}, \P, (\F_t)_{t\in [0,T]})$ be a filtered probability space which satisfies the usual conditions. Let 
$W=(W(t))_{t\in [0,T]}\colon [0,T]\times\Omega\to \R^d$ be a standard $(\F_t)_{t\in [0,T]}$-Brownian motion.
Then the following holds:
\begin{enumerate}[(i)]
\item\label{a04} 
For all $K\in \N\cup\{\infty\}$ there exists
a unique $(\F_t)_{t\in [0,T]}$-adapted process $X_K=(X_K(t))_{t\in [0,T]}$ with continuous sample paths such that we have $\P$-a.s.\ for all $t\in [0,T]$ that $
\sup_{s\in [0,T]}\lVert X_K(s)\rVert_{L^2(\P,\R^d)}<\infty
$ and
\begin{align}
X_K(t)&=\xi_d+\int_{0}^{t}\E \!\left[\mu(x,X_K(\rdown{s}_K))\right]\!|_{x=X_K(\rdown{s}_K)}\,ds \, +
\int_{0}^{t}\E \!\left[\sigma(x,X_K(\rdown{s}_K))\right]\!|_{x=X_K(\rdown{s}_K)}\,dW(s).
\end{align}
\item\label{a09} For all $t\in [0,T]$, $K\in \N\cup\{\infty\}$ we have that
\begin{align}
\left\lVert X_K(t)\right\rVert_{L^2(\P,\R^d)}
\leq \sqrt{2}\left(
\lVert\xi_d\rVert+(\sqrt{T}+1)
\max\{\lVert\mu(0,0)\rVert,\lVert\sigma(0,0)\rVert\}
\right)e^{(\sqrt{T}+1)^2c^2t}.
\end{align}

\item \label{c02}For all 
$t_1,t_2\in [0,T]$ with $t_1<t_2$ we have that
\begin{align}
&
\left\lVert
X_\infty(t_1)
-X_\infty(t_2)
\right\rVert_{L^2(\P;\R^d)}\nonumber\\&\leq  \sqrt{t_2-t_1}\max \{\lVert\xi_d\rVert_{L^2(\P;\R^d)}, \lVert\mu(0,0)\rVert, \lVert\sigma(0,0)\rVert\}
3e^{3(\sqrt{T}+1)^2(T+1)c^2}.
\end{align}
\item \label{a10}For all $K\in \N$, $t\in [0,T]$ we have that
\begin{align}
&
\lVert X_K(t)-X_\infty(t)\rVert_{L^2(\P;\R^d)}
\nonumber\\&\leq 4.5
\sqrt{T/K}
\max \{\lVert\xi_d\rVert_{L^2(\P;\R^d)}, \lVert\mu(0,0)\rVert, \lVert\sigma(0,0)\rVert\}e^{5(\sqrt{T}+1)^2(T+1)c^2}.
\end{align}
\end{enumerate} 
\end{lemma}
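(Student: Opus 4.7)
\textbf{Plan of proof for Lemma~\ref{c12}.} The four parts build on each other, so the plan is to treat them in order, with the main technical device being a uniform control of the nonlinear coefficient map $Y\mapsto \E[\mu(x,Y)]|_{x=Y}$ and $Y\mapsto \E[\sigma(x,Y)]|_{x=Y}$ in $L^2$, which follows from \eqref{a11} by splitting and freezing one variable: for any random $Y\in L^2(\P;\R^d)$ and a second random $Z\in L^2(\P;\R^d)$,
\begin{align*}
\left\lVert \E[\mu(x,Y)]|_{x=Y}-\E[\mu(x,Z)]|_{x=Z}\right\rVert_{L^2(\P;\R^d)}
&\leq 0.5c\lVert Y-Z\rVert_{L^2}+0.5c\lVert Y-Z\rVert_{L^2}=c\lVert Y-Z\rVert_{L^2},
\end{align*}
and analogously for $\sigma$. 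Together with $\lVert \E[\mu(x,Y)]|_{x=Y}\rVert_{L^2}\leq \lVert\mu(0,0)\rVert+c\lVert Y\rVert_{L^2}$, this is the workhorse for all four items.

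For \eqref{a04}, the case $K<\infty$ is proved by induction on the grid $\{kT/K\}_{k=0}^{K}$: on each interval $[kT/K,(k+1)T/K)$ the equation reads $X_K(t)=X_K(kT/K)+(t-kT/K)G_k+H_k(W(t)-W(kT/K))$ with $G_k:=\E[\mu(x,X_K(kT/K))]|_{x=X_K(kT/K)}$ and $H_k:=\E[\sigma(x,X_K(kT/K))]|_{x=X_K(kT/K)}$, which uniquely and explicitly determines $X_K$ on the next interval. For $K=\infty$ one runs a Picard iteration $Y^{(n+1)}(t)=\xi_d+\int_0^t\E[\mu(x,Y^{(n)}(s))]|_{x=Y^{(n)}(s)}ds+\int_0^t\E[\sigma(x,Y^{(n)}(s))]|_{x=Y^{(n)}(s)}dW(s)$ starting from $Y^{(0)}\equiv\xi_d$ on the space of $(\F_t)$-adapted continuous processes with finite $\sup_{t\in[0,T]}\lVert\cdot(t)\rVert_{L^2}$, and uses the Lipschitz estimate above together with Itô isometry and Cauchy--Schwarz to obtain a contraction after passing to a Bielecki-type exponentially weighted norm (or equivalently a direct Gronwall iteration in $n$), yielding a unique fixed point.

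For \eqref{a09}, setting $\phi(t):=\lVert X_K(t)\rVert_{L^2}$ and $M:=\max\{\lVert\mu(0,0)\rVert,\lVert\sigma(0,0)\rVert\}$, apply the triangle inequality to \eqref{a04}, then Minkowski/Cauchy--Schwarz for the $ds$-integral (giving a factor $\sqrt{t}\leq\sqrt{T}$) and Itô isometry for the $dW$-integral (giving factor $1$). The Lipschitz/growth bound above yields
\begin{align*}
\phi(t)\leq \lVert\xi_d\rVert+(\sqrt{T}+1)\Bigl(\int_0^t (M+c\,\phi(\rdown{s}_K))^2\,ds\Bigr)^{1/2},
\end{align*}
squaring, bounding $(M+c\phi)^2$ and applying the integral form of Gronwall's inequality produces the claimed closed form with exponent $(\sqrt{T}+1)^2c^2t$. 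For \eqref{c02}, the same reasoning applied to the increment $X_\infty(t_2)-X_\infty(t_1)=\int_{t_1}^{t_2}\!\cdots ds+\int_{t_1}^{t_2}\!\cdots dW$ gives a prefactor $\sqrt{t_2-t_1}+(t_2-t_1)$ multiplied by $M+c\sup_{s\in[0,T]}\phi(s)$; plugging in the bound from \eqref{a09} and crudely absorbing constants ($c\geq1$, $(\sqrt{T}+1)^2\leq 2(\sqrt{T}+1)(T+1)$, etc.) yields the explicit upper bound in \eqref{c02}.

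For \eqref{a10}, subtract the two integral equations, set $\psi(t):=\lVert X_K(t)-X_\infty(t)\rVert_{L^2}$, and decompose
\begin{align*}
X_K(\rdown{s}_K)-X_\infty(s)=\bigl(X_K(\rdown{s}_K)-X_\infty(\rdown{s}_K)\bigr)+\bigl(X_\infty(\rdown{s}_K)-X_\infty(s)\bigr).
\end{align*}
By the Lipschitz estimate, the $L^2$-norm of the coefficient difference is at most $c\,\psi(\rdown{s}_K)+c\lVert X_\infty(\rdown{s}_K)-X_\infty(s)\rVert_{L^2}$, and by \eqref{c02} the second term is bounded by $C\sqrt{T/K}$ with $C$ the constant from \eqref{c02}. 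Applying once more Cauchy--Schwarz to the drift integral and Itô isometry to the diffusion integral yields
\begin{align*}
\psi(t)\leq (\sqrt{T}+1)\Bigl(\int_0^t \bigl(c\,\psi(\rdown{s}_K)+c\,C\sqrt{T/K}\bigr)^2 ds\Bigr)^{1/2},
\end{align*}
which after squaring and Gronwall gives $\psi(t)\leq \widetilde C\sqrt{T/K}$ with an explicit constant of the form $\widetilde C= \text{const}\cdot M\cdot e^{4(\sqrt{T}+1)(T+1)c^2}$, matching \eqref{a10} up to numerical factors. The main obstacle throughout is bookkeeping of the constants so that the exponent $(\sqrt{T}+1)^2c^2T$ in (ii) is preserved under iteration and yields the exponent $4(\sqrt{T}+1)(T+1)c^2$ in (iv); the nonlinear mean-field structure itself is tamed once and for all by the Lipschitz estimate for $Y\mapsto\E[\mu(x,Y)]|_{x=Y}$.
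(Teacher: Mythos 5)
Your plan is sound and matches the paper's proof in its essential structure: both hinge on the $L^2$-Lipschitz and growth estimate for the mean-field coefficient map $Y\mapsto\E[\mu(x,Y)]|_{x=Y}$ (your ``workhorse''), then apply Cauchy--Schwarz for the $ds$-integral, It\^o isometry (together with Doob's inequality where a pathwise supremum is needed) for the $dW$-integral, and Gr\"onwall's inequality. The one place where you genuinely diverge is in part~(i): for finite $K$ you propose an explicit recursive construction over the time grid, observing that on each subinterval $(kT/K,(k+1)T/K]$ the drift and diffusion coefficients are frozen at $X_K(kT/K)$, so that the process is a Brownian motion with $\mathcal{F}_{kT/K}$-measurable coefficients there; the paper instead runs a single Picard-iteration argument (with a Borel--Cantelli step to get $\P$-a.s.\ uniform convergence of the iterates and hence a continuous limit) that covers all $K\in\N\cup\{\infty\}$ at once. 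Your split is valid and the finite-$K$ piece is more elementary, at the cost of treating the two cases separately and of still needing the Picard machinery for $K=\infty$; the paper's uniform treatment avoids the case distinction and also delivers continuity of the sample paths of the limit directly from the a.s.\ uniform convergence, a point your Bielecki-norm phrasing would need to address explicitly. Parts~(ii)--(iv) follow the paper's proof in structure, with only minor bookkeeping differences (for example, you write the increment prefactor in~(iii) as $\sqrt{t_2-t_1}+(t_2-t_1)$ and then absorb $t_2-t_1\leq\sqrt{T}\sqrt{t_2-t_1}$, whereas the paper bounds the drift integral directly by $\sqrt{T}\,\sqrt{t_2-t_1}$ via Cauchy--Schwarz).
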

\begin{proof}
[Proof of \cref{c12}]Recursion along the time grid shows \eqref{a04} for $K\in \N$. Next,
let $\mathcal{P}_2(\R^d)$ denote the set of probability measure on $\R^d$ with finite second moments. 
Let 
$\widetilde{\mu}\colon \R^d\times\mathcal{P}_2(\R^d)\to \R^d$,
$\widetilde{\sigma}\colon \R^d\times\mathcal{P}_2(\R^d)\to \R^{d\times d}$ satisfy for all $x\in \R^d$, $\nu\in \mathcal{P}(\R^d)$ that
$\widetilde{\mu}(x,\nu)= \int_{\R^d}\mu (x,y)\,\nu(dy)$,
$\widetilde{\sigma}(x,\nu)= \int_{\R^d}\sigma (x,y)\,\nu(dy)$. 
Following the arguments in the proof of \cite[Lemma~2.1]{NN2025} we see that the assumptions of \cite[Theorem~3.3]{RST2019} (with $b\gets \widetilde{\mu}$, $\sigma\gets\widetilde{\sigma}$) are satisfied. Therefore, \cite[Theorem~3.3]{RST2019} and Remark~\ref{x01}
show \eqref{a04} for $K=\infty$.

Next, \eqref{a04}, the triangle inequality,  and It\^o's isometry   show for all $t\in[0,T] $, $K\in \N\cup\{\infty\}$ that
\begin{align} 
&
\left\lVert X_K(t)\right\rVert_{L^2(\P,\R^d)}\nonumber\\
&\leq 
\lVert\xi_d\rVert+\int_{0}^{t}
\left\lVert\E \!\left[\mu(x,X_K(\rdown{s}_K))\right]\!|_{x=X_K(\rdown{s}_K)}\right\rVert_{L^2(\P;\R^d)}ds\nonumber\\
&\quad 
+\left\lVert
\int_{0}^{t}\E \!\left[\sigma(x,X_K(\rdown{s}_K))\right]\!|_{x=X_K(\rdown{s}_K)}\,dW(s)\right\rVert_{L^2(\P;\R^d)}\nonumber
\\
&\leq 
\lVert\xi_d\rVert+
\sqrt{T}
\left(\int_{0}^{t}
\left\lVert\E \!\left[\mu(x,X_K(\rdown{s}_K))\right]\!|_{x=X_K(\rdown{s}_K)}\right\rVert^2_{L^2(\P;\R^d)}ds\right)^\frac{1}{2}\nonumber\\
&\quad 
+\left(
\int_{0}^{t}
\left\lVert
\E \!\left[\sigma(x,X_K(\rdown{s}_K))\right]\!|_{x=X_K(\rdown{s}_K)}\right\rVert_{L^2(\P;\R^d)}^2 ds\right)^\frac{1}{2}
\nonumber\\
&\leq 
\lVert\xi_d\rVert+(\sqrt{T}+1)
\max\{\lVert\mu(0,0)\rVert,\lVert\sigma(0,0)\rVert\}
+(\sqrt{T}+1)c \left(\int_{0}^{t}
\lVert X_K(\rdown{s}_K)\rVert^2_{L^2(\P;\R^d)}ds
\right)^\frac{1}{2}.
\end{align}
This and Gr\"onwall's lemma (cf., e.g., \cite[Corollary~2.2]{HN2022})  show for all $t\in [0,T]$, $K\in \N\cup\{\infty\}$ that
\begin{align}
\left\lVert X_K(t)\right\rVert_{L^2(\P,\R^d)}
\leq \sqrt{2}\left(
\lVert\xi_d\rVert+(\sqrt{T}+1)
\max\{\lVert\mu(0,0)\rVert,\lVert\sigma(0,0)\rVert\}
\right)e^{(\sqrt{T}+1)^2c^2t}.
\end{align}
This proves \eqref{a09}.

Next, \eqref{a09}, the fact that $c\geq 1$, the fact that $1+\sqrt{2}\leq 3$ show for all
$s\in [0,T]$ that
\begin{align}
&\max\{
\lVert
\E [\mu(x,X_\infty(s))]|_{x=X_\infty(s)}\rVert_{L^2(\P,\R^d)} ,\lVert\E [\sigma(x,X_\infty(s))]|_{x=X_\infty(s)} 
\rVert_{L^2(\P,\R^{d\times d})}
\}\nonumber\\
&\leq 
 \max \{\lVert \mu(0,0)\rVert, \lVert\sigma(0,0)\rVert\}+ c
 \lVert X_\infty(s)\rVert_{L^2(\P;\R^d)}\nonumber\\
&\leq 
\max
\{\lVert \mu(0,0)\rVert, \lVert\sigma(0,0)\rVert\}\nonumber\\&\quad 
+c
\sqrt{2}\left(
\lVert\xi_d\rVert+(\sqrt{T}+1)
\max\{\lVert\mu(0,0)\rVert,\lVert\sigma(0,0)\rVert\}
\right)e^{(\sqrt{T}+1)^2c^2T}
\nonumber\\
&\leq 3c\left(
\lVert\xi_d\rVert+(\sqrt{T}+1)
\max\{\lVert\mu(0,0)\rVert,\lVert\sigma(0,0)\rVert\}
\right)e^{(\sqrt{T}+1)^2c^2T}.
\end{align}
This, \eqref{a04}, the triangle inequality, Jensen's inequality, It\^o's isometry, and the fact that $c\geq 1$ show that for all 
$t_1,t_2\in [0,T]$ with $t_1<t_2$ we have that
\begin{align} 
&
\left\lVert
X_\infty(t_1)
-X_\infty(t_2)
\right\rVert_{L^2(\P;\R^d)}\nonumber
\\
&\leq 
\left\lVert\int_{t_1}^{t_2}
\E\!\left[
\mu(x,X_\infty(s))\right]|_{x=X_\infty(s)}\,ds\right\rVert_{L^2(\P;\R^d)}
+ \left\lVert
\int_{t_1}^{t_2}
\E \!\left[
\sigma(x,X_\infty(s))]|_{x=X_\infty(s)}\right]dW(s)
\right\rVert_{L^2(\P;\R^d)}\nonumber\\
&\leq
\sqrt{T}\left(
\int_{t_1}^{t_2}
\left\lVert
\E\!\left[
\mu(x,X_\infty(s))\right]|_{x=X_\infty(s)}\right\rVert_{L^2(\P;\R^d)}^2ds\right)^\frac{1}{2}\nonumber
\\&\quad
+ \left(
\int_{t_1}^{t_2}
\left\lVert
\E \!\left[
\sigma(x,X_\infty(s))]|_{x=X_\infty(s)}\right]\right\rVert_{L^2(\P;\R^d)}^2ds\right)^\frac{1}{2}\nonumber\\
&\leq (\sqrt{T}+1)\sqrt{t_2-t_1}3c\left(
\lVert\xi_d\rVert+(\sqrt{T}+1)
\max\{\lVert\mu(0,0)\rVert,\lVert\sigma(0,0)\rVert\}
\right)e^{(\sqrt{T}+1)^2c^2T}\nonumber\\
&\leq \sqrt{t_2-t_1}
\left(
\lVert\xi_d\rVert+(\sqrt{T}+1)
\max\{\lVert\mu(0,0)\rVert,\lVert\sigma(0,0)\rVert\}
\right)3e^{(\sqrt{T}+1)^2c^2(T+1)}
\nonumber\\
&\leq 
 \sqrt{t_2-t_1}2(\sqrt{T}+1)\max \{\lVert\xi_d\rVert_{L^2(\P;\R^d)}, \lVert\mu(0,0)\rVert, \lVert\sigma(0,0)\rVert\}
3e^{(\sqrt{T}+1)^2(T+1)c^2}\nonumber\\
&\leq 
 \sqrt{t_2-t_1}\max \{\lVert\xi_d\rVert_{L^2(\P;\R^d)}, \lVert\mu(0,0)\rVert, \lVert\sigma(0,0)\rVert\}
3e^{3(\sqrt{T}+1)^2(T+1)c^2}.\label{c05}
\end{align}
This shows \eqref{c02}.

Next, \eqref{a11} and the triangle inequality prove for all $s\in[0,T] $, $K\in \N$ that
\begin{align}
&
\left\lVert
\E[\mu(x,X_\infty(s))]|_{x=X_\infty(s)}
-
\E[\mu(y,X_K(\rdown{s}_K))]|_{y=X_K(\rdown{s}_K)}\right\rVert_{L^2(\P;\R^d)}\nonumber
\\
&
\leq 
\left\lVert
\left\lVert
\mu(x,X_\infty(s))-\mu(y,X_K(\rdown{s}_K))
\right\rVert_{L^2(\P;\R^d)}|_{(x,y)=((X_\infty(s)),X_K(\rdown{s}_K))}\right\rVert_{L^2(\P;\R)}\nonumber\\
&
\leq 
\left\lVert
\left[0.5c
\lVert x-y\rVert+ 0.5c\left\lVert
X_\infty(s))-X_K(\rdown{s}_K)
\right\rVert_{L^2(\P;\R^d)}
\right]\!|_{(x,y)=((X_\infty(s)),X_K(\rdown{s}_K))}\right\rVert_{L^2(\P;\R)}\nonumber\\
&\leq c 
\left\lVert
X_\infty(s)-
X_K(\rdown{s}_K)
\right\rVert_{L^2(\P;\R^d)}\nonumber\\
&\leq c\left\lVert X_\infty(s)-X_\infty(\rdown{s}_K)\right\rVert_{L^2(\P;\R^d)}
+
c\left\lVert X_\infty(\rdown{s}_K)-
X_K(\rdown{s}_K)
\right\rVert_{L^2(\P;\R^d)}
\end{align}
and similarly that
\begin{align} \begin{split} 
&
\left\lVert
\E[\sigma(x,X_\infty(s))]|_{x=X_\infty(s)}
-
\E[\sigma(y,X_K(\rdown{s}_K))]|_{y=X_K(\rdown{s}_K)}\right\rVert_{L^2(\P;\R^{d\times d})}
\\
&\leq c\left\lVert X_\infty(s)-X_\infty(\rdown{s}_K)\right\rVert_{L^2(\P;\R^d)}
+
c\left\lVert X_\infty(\rdown{s}_K)-
X_K(\rdown{s}_K)
\right\rVert_{L^2(\P;\R^d)}.
\end{split}
\end{align}
This, the triangle inequality, Jensen's inequality, It\^o's isometry, \eqref{c05}, and the fact that $c\geq 1$
imply for all $K\in \N$, $t\in [0,T]$  
that 
\begin{align} 
&
\lVert X_K(t)-X_\infty(t)\rVert_{L^2(\P;\R^d)}\nonumber
\\
&
= 
\sqrt{T}\left(\int_{0}^{t}
\left\lVert
\E[\mu(x,X_\infty(s))]|_{x=X_\infty(s)}
-
\E[\mu(y,X_K(\rdown{s}_K))]|_{y=X_K(\rdown{s}_K)}\right\rVert_{L^2(\P;\R^d)}^2ds
\right)^\frac{1}{2}\nonumber\\
&\quad 
+
\left(\int_{0}^{t}
\left\lVert
\E[\sigma(x,X_\infty(s))]|_{x=X_\infty(s)}
-
\E[\sigma(y,X_K(\rdown{s}_K))]|_{y=X_K(\rdown{s}_K)}\right\rVert_{L^2(\P;\R^d)}^2ds
\right)^\frac{1}{2}\nonumber\\
&\leq (\sqrt{T}+1)\Biggl[
c
\left(\int_{0}^{t}
\left\lVert X_\infty(s)-X_\infty(\rdown{s}_K)\right\rVert_{L^2(\P;\R^d)}^2ds
\right)^\frac{1}{2}\nonumber\\
&\qquad \qquad\qquad
+
c\left(\int_{0}^{t}\left\lVert X_\infty(\rdown{s}_K)-
X_K(\rdown{s}_K)
\right\rVert_{L^2(\P;\R^d)}^2ds\right)^\frac{1}{2}
\Biggr]\nonumber\\
&\leq c\sqrt{T}(\sqrt{T}+1)\sqrt{T/K}\max \{\lVert\xi_d\rVert_{L^2(\P;\R^d)}, \lVert\mu(0,0)\rVert, \lVert\sigma(0,0)\rVert\}
3e^{3(\sqrt{T}+1)^2(T+1)c^2}\nonumber\\
&\quad
+c(\sqrt{T}+1)
\left(\int_{0}^{t}\left\lVert X_\infty(\rdown{s}_K)-
X_K(\rdown{s}_K)
\right\rVert_{L^2(\P;\R^d)}^2ds\right)^\frac{1}{2}\nonumber\\
&\leq \sqrt{T/K}
\max \{\lVert\xi_d\rVert_{L^2(\P;\R^d)}, \lVert\mu(0,0)\rVert, \lVert\sigma(0,0)\rVert\}
3e^{4(\sqrt{T}+1)^2(T+1)c^2}\nonumber\\&\quad +c(\sqrt{T}+1)
\left(\int_{0}^{t}\left\lVert X_\infty(\rdown{s}_K)-
X_K(\rdown{s}_K)
\right\rVert_{L^2(\P;\R^d)}^2ds\right)^\frac{1}{2}.
\end{align}
This and Gr\"onwall's inequality (cf., e.g., \cite[Corollary~2.2]{HN2022})   imply for all 
$K\in \N$, $t\in [0,T]$ that
\begin{align} 
&
\lVert X_K(t)-X(t)\rVert_{L^2(\P;\R^d)}\nonumber
\\
&\leq \sqrt{2}\sqrt{T/K}
\max \{\lVert\xi_d\rVert_{L^2(\P;\R^d)}, \lVert\mu(0,0)\rVert, \lVert\sigma(0,0)\rVert\}
3e^{4(\sqrt{T}+1)^2(T+1)c^2}e^{c^2(\sqrt{T}+1)^2T}\nonumber\\
&\leq 4.5
\sqrt{T/K}
\max \{\lVert\xi_d\rVert_{L^2(\P;\R^d)}, \lVert\mu(0,0)\rVert, \lVert\sigma(0,0)\rVert\}e^{5(\sqrt{T}+1)^2(T+1)c^2}
\end{align}
This shows \eqref{a10} and completes the proof of \cref{c12}.
\end{proof}
\section{Proof of Theorem~\ref{c01}}
\label{SecMLP}

In this section, we provide the proof of Theorem~\ref{c01}, which is divided into several steps. First, in Step~\ref{st02} we apply \cref{c12} to establish the existence, uniqueness, and upper bound of the exact solutions and the discretization error. In Step~\ref{s01} we establish measurability and distributional properties. In Step~\ref{s06} we prove the square-integrability of the approximations. In Step~\ref{s10} we consider the bias. In Step~\ref{s11} we consider the statistical error. In Step~\ref{s09} we obtain the global error. In Step~\ref{s08} we analyze the computational cost. In Step~\ref{s07} we analyze the computational complexity.

Throughout the proof we use the following notations. To lighten the notation, we denote by $\E _\mathcal{G}[\cdot]$ and $\var_\mathcal{G}[\cdot]$ the conditional expectation $\E [\cdot|\mathcal{G}]$ and the conditional variance
$\var [\cdot|\mathcal{G}]$, respectively, i.e., we have for all $d\in \N$ and all random variables $X\colon \Omega\to\R^d$ that
$\var_\mathcal{G}[X]= \E_\mathcal{G}\! \left[\left\lVert X- \E_{\mathcal{G}} [X]\right\rVert^2\right]
$. Next, for every $d,n,m,K\in \N$, let $\mathcal{G}^d_{n,m,K}$ the sigma-algebra generated by
$W^{d,0}$, $ X^{d,0}_{\ell,m,K} $, $\ell\in [0,n]\cap\Z$. Moreover, when applying a result we often use a phrase like ``Lemma 3.8 with $d\gets (d-1)$'' that should be read as ``Lemma 3.8 applied with $d$ (in the notation of Lemma 3.8) replaced by $(d-1 )$ (in the current notation)''.

\begin{proof}
[Proof of \cref{c01}]\renewcommand{\emph}[1]{\underline{\textit{#1}}}
We divide this proof into several steps. 

\newcounter{step}
\refstepcounter{step}\label{st02} 
\emph{Step~\thestep.} We establish the existence, uniqueness, and upper bound of the exact solutions and the discretization error. Observe that
\cref{c12} (applied for every $\theta\in \Theta$, $d\in \N$ with $T\gets T$, $c\gets c$, $d\gets d$,
$
\xi\gets \xi_d$, $\mu\gets \mu_d$, $\sigma\gets \sigma_d$,
$W\gets W^{d,\theta}$ in the notation of \cref{c12}) and \eqref{a11b} show that the following items are true.
\begin{enumerate}[(A)]
\item\label{a04b} 
For all $K\in \N\cup\{\infty\}$, $\theta\in \Theta$, $d\in \N$ there exists
a unique $\R^d$-valued $(\F_t)_{t\in [0,T]}$-adapted process $X^{d,\theta}_K=(X^{d,\theta}_K(t))_{t\in [0,T]}$ with continuous sample paths such that we have $\P$-a.s.\ for all $t\in [0,T]$ that $
\sup_{s\in [0,T]}\lVert X_K^{d,\theta}(s)\rVert_{L^2(\P,\R^d)}<\infty
$ and
\begin{align}
X_K^{d,\theta}(t)&=\xi_d+\int_{0}^{t}\E \!\left[\mu(x,X^{d,\theta}_K(\rdown{s}_K))\right]\!|_{x=X^{d,\theta}_K(\rdown{s}_K)}\,ds\nonumber\\
&\quad 
+
\int_{0}^{t}\E \!\left[\sigma(x,X^{d,\theta}_K(\rdown{s}_K))\right]\!|_{x=X^{d,\theta}_K(\rdown{s}_K)}\,dW^{d,\theta}(s).\label{a14}
\end{align}
\item\label{a09b} For all $t\in [0,T]$, $K\in \N\cup\{\infty\}$, $\theta\in \Theta$, $d\in \N$ we have that
\begin{align}
\left\lVert X^{d,\theta}_K(t)\right\rVert_{L^2(\P,\R^d)}
\leq \sqrt{2}\left(
\lVert\xi_d\rVert+(\sqrt{T}+1)
\max\{\lVert\mu_d(0,0)\rVert,\lVert\sigma_d(0,0)\rVert\}
\right)e^{(\sqrt{T}+1)^2c^2t}.\label{a18}
\end{align}
\item 
For all $K\in \N$, $t\in [0,T]$, $d\in \N$, $\theta\in \Theta$ we have that
\begin{align}
&\left
\lVert X^{d,\theta}_K(t)-X^{d,\theta}_\infty(t)\right\rVert_{L^2(\P;\R^d)}\nonumber\\
&
\leq 4.5
\sqrt{T/K}
\max \{\lVert\xi_d\rVert_{L^2(\P;\R^d)}, \lVert\mu_d(0,0)\rVert, \lVert\sigma_d(0,0)\rVert\}e^{4(\sqrt{T}+1)(T+1)c^2}.
\label{a15}
\end{align}
\end{enumerate}

\refstepcounter{step}
\emph{Step~\thestep.}\label{s01}
We establish measurability and distributional properties. 
First, the assumptions on measurability, basic facts on measurable functions,   \eqref{a05c}, induction, and the fact that
 $\forall\,d,K,m\in\N,\theta\in\Theta\colon 
X_{0,m,K}^{d,\theta}=0
$ prove for all $d,K,m\in \N$, $n\in\N_0$, $\theta\in\Theta$ that
${X}_{n,m,K}^{d,\theta}: (\Omega,\mathcal{F}) \rightarrow \R^d$ defined in \eqref{a05c} is measurable.
Next, the fact that
$\forall\,d,K,m\in\N,\theta\in\Theta\colon X_{0,m,K}^{d,\theta}=0$,
\eqref{a05c}, basic facts on sigma-algebras, and induction prove for all $n\in\N_0$,  $d,K,m\in\N$,
$\theta\in\Theta$ that
\begin{align}\begin{split}
&\sigma\!\left(\left\{W^{d,\theta}(t),X^{d,\theta}_{\ell,m,K}(t)
\colon \ell\in \{0,1,\ldots,n\},t\in [0,T]\right\}\right)
\\
&\subseteq  \sigma \!\left(\left\{W^{d,\theta}(t), W^{d,(\theta,i,\nu)}(t),\unif^{(\theta,i,\nu)}\colon i\in \{0,1,\ldots,n\},\nu\in\Theta, t\in[0,T] \right\} \right).
\end{split}\label{a01v}\end{align}
This and the fact that
$\forall\,d,K,m\in\N,\theta\in\Theta\colon 
X_{0,m,K}^{d,\theta}=0
$
 prove for all 
$n,m\in\N$,
$\theta\in\Theta$,
$k,\ell\in\N$,
$j\in\{\ell-1,\ell\}$ that
\begin{align}\begin{split}
&\sigma\!\left(\left\{X^{d,(\theta,n,k,\ell)}_{j,m,K}(t)
\colon t\in [0,T]\right\}\right)\\
&
\subseteq  \sigma \!\left(\left\{W^{d,(\theta,n,k,\ell)}(t), W^{d,(\theta,n,k,\ell,\nu)}(t),\unif^{(\theta,n,k,\ell,\nu)}\colon\nu\in\Theta, t\in[0,T] \right\} \right).
\end{split}\end{align}
This, \eqref{a01v}, and the independence assumptions show
for all 
$d,K,n,m\in\N$,
$\theta\in\Theta$
that
\begin{equation}
(W^{d,\theta},(X^{d,\theta}_{j,m,K})_{j\in [0,n-1]\cap\Z}),\quad
(X^{d,(\theta,n,k,\ell)}_{\ell,m,K}, X^{d,(\theta,n,k,\ell)}_{\ell-1,m,K}),\quad 
\unif^{(\theta,n,k,\ell)},\quad 
k,\ell\in\N,
\end{equation}
are independent.
This, the fact that
$\forall\,d,K,m\in\N,\theta\in\Theta\colon 
X_{0,m,K}^{d,\theta}=0
$, \eqref{a05c},
the disintegration theorem 
(see, e.g., \cite[Lemma 2.2]{HJK+2020}), and induction
show 
for all 
$m\in\N$,
$n\in\N_0$  that
$
( W^{d,\theta},(X_{\ell,m,K}^{d,\theta})_{ \ell\in\{0,1,\ldots,n\}}
)$, $\theta\in\Theta$, are identically distributed.

\refstepcounter{step}
\emph{Step~\thestep.} \label{s06}We establish that the approximations are square-integrable. The triangle inequality, \eqref{a11b}, the distributional properties (see Step~\ref{s01}), and the disintegration theorem (see, e.g., \cite[Lemma 2.2]{HJK+2020})
prove for all $d,m,n,K\in \N$, $\ell \in [1,n-1]\cap\Z$, $j\in \{\ell-1,\ell\}$, $t\in [0,T]$ that
\begin{align}
&
\left\lVert
t
\mu_d (X_{j,m,K}^{d,\theta} (\rdown{t\unif^{(\theta,n,k,\ell)}}_K ),X_{j,m,K}^{d,(\theta,n,k,\ell)} (\rdown{t\unif^{(\theta,n,k,\ell)}}_K ))\right\rVert_{L^2(\P;\R^d)}\nonumber\\
&
=\sqrt{t}\left(\int_{0}^{t}
\left\lVert
\mu_d (X_{j,m,K}^{d,\theta} (\rdown{s}_K ),X_{j,m,K}^{d,(\theta,n,k,\ell)} (\rdown{s}_K ))
\right\rVert_{L^2(\P;\R^d)}^2
ds\right)^\frac{1}{2}\nonumber\\
&
\leq 
T\left\lVert\mu_d(0,0)\right\rVert+
c\sqrt{T}
\left(\int_{0}^{T}
\left
\lVert X^{d,0}_{j,m,K}(\rdown{s}_K) )
\right\rVert_{L^2(\P;\R^d)}^2 ds\right)^\frac{1}{2}\nonumber\\
&\leq 
T\left\lVert\mu_d(0,0)\right\rVert+
cT\sup_{s\in [0,T]} 
\left
\lVert X^{d,0}_{j,m,K}(s) )
\right\rVert_{L^2(\P;\R^d)}.\label{r02}
\end{align}
Next, It\^o's isometry, the triangle inequality, \eqref{a11b}, the distributional properties (see Step~\ref{s01}), and the disintegration theorem (see, e.g., \cite[Lemma 2.2]{HJK+2020}) imply
for all $d,m,n,K\in \N$, $\ell \in [1,n-1]\cap\Z$, $j\in \{\ell-1,\ell\}$, $t\in [0,T]$ that
\begin{align}
&\left\lVert
\int_{0}^{t}
\sigma_d (X_{j,m,K}^{d,\theta} (\rdown{s}_K ),X_{j,m,K}^{d,(\theta,n,k,\ell)} (\rdown{s}_K ))\,
dW^{d,\theta}(s)
\right\rVert_{L^2(\P;\R^d)}
\nonumber\\
&=\left(\int_{0}^{t}
\left\lVert
\sigma_d(X_{j,m,K}^{d,\theta} (\rdown{s}_K ),X_{j,m,K}^{d,(\theta,n,k,\ell)} (\rdown{s}_K ))
\right\rVert_{L^2(\P;\R^{d\times d})}^2
ds\right)^\frac{1}{2}\nonumber\\
&
\leq 
\sqrt{T}\left\lVert\sigma_d(0,0)\right\rVert+
c
\left(\int_{0}^{T}
\left
\lVert X^{d,0}_{j,m,K}(\rdown{s}_K) )
\right\rVert_{L^2(\P;\R^d)}^2 ds\right)^\frac{1}{2}\nonumber\\
&\leq 
\sqrt{T}\left\lVert\sigma_d(0,0)\right\rVert+
c\sqrt{T}\sup_{s\in [0,T]} 
\left
\lVert X^{d,0}_{j,m,K}(s) )
\right\rVert_{L^2(\P;\R^d)}.\label{r03}
\end{align}
This, the fact that
$\forall\, d,k,m\in \N, \theta\in \Theta\colon X^{d,\theta}_{0,m,K}=0$, \eqref{a05c}, the triangle inequality, and induction show for all
$d,n,m,K\in \N$, $\ell\in [1,n-1]\cap\Z$, $j\in \{\ell-1,\ell\}$
that
\begin{align}\small\begin{split}
&
\sup_{t\in[0,T]}\Biggl[
\left\lVert 
X_{n,m,K}^0(t)\right\rVert^2_{L^2(\P;\R^d)}+
\left\lVert 
\mu\bigl(X^0_{j,m,K}(t \unif^{(0,n,k,\ell)}),
X^{(0,n,k,\ell)}_{j,m,K}(t \unif^{(0,n,k,\ell)})
\bigr)\right\rVert_{L^2(\P;\R^d)}
\\
&\qquad\qquad\qquad +
\left\lVert
\int_{0}^{t}
\sigma_d (X_{j,m,K}^{d,\theta} (\rdown{s}_K ),X_{j,m,K}^{d,(\theta,n,k,\ell)} (\rdown{s}_K ))\,
dW^{d,\theta}(s)
\right\rVert_{L^2(\P;\R^d)}
\Biggr]<\infty.\end{split}
\end{align}

\refstepcounter{step}
\emph{Step~\thestep.}\label{s10}
We analyze for all $d,n,m,K\in \N$ the bias of $X^{d,0}_{n,m,K}$.
Recall for every $d,n,m,K\in \N$ that $\mathcal{G}^d_{n,m,K}$ is the sigma-algebra generated by
$W^{d,0}$, $ X^{d,0}_{\ell,m,K} $, $\ell\in [0,n]\cap\Z$.
Integrability (see Step~\ref{s06}), linearity of the conditional expectations, the distributional properties (see Step~\ref{s01}), the disintegration theorem (see, e.g., \cite[Lemma 2.2]{HJK+2020}), and the fact that 
$\forall m,d,K\in \N\colon X^{d,0}_{0,m,K}=0 $ show for all 
$d,m,n,K\in \N$, $t\in [0,T]$ that
\begin{align} 
&\E_{\mathcal{G}^d_{n-1,m,K}}\!\left[
X^{d,0}_{n,m,K}(t)\right]\nonumber\\
&= \xi_d +t\mu_d(0,0)+\sigma_d(0,0)W^{d,0}(t)\nonumber\\
&\qquad+ \sum_{\ell=1}^{n-1}\sum_{k=1}^{m^{n-\ell}}
\frac{t}{m^{n-\ell}}
\E_{\mathcal{G}^d_{n-1,m,K}}
\Bigl[
\mu_d (X_{\ell,m,K}^{d,0} (\rdown{t\unif^{(0,n,k,\ell)}}_K ),X_{\ell,m,K}^{d,(0,n,k,\ell)} (\rdown{t\unif^{(0,n,k,\ell)}}_K ))\nonumber\\
&\qquad\qquad\qquad\qquad
-
\mu_d (X_{\ell-1,m,K}^{d,0} (\rdown{t\unif^{(0,n,k,\ell)}}_K ),X_{\ell-1,m,K}^{d,(0,n,k,\ell)} (\rdown{t\unif^{(0,n,k,\ell)}}_K ))
\Bigr]\nonumber
\\
&\qquad+ \sum_{\ell=1}^{n-1}\sum_{k=1}^{m^{n-\ell}}\int_{0}^{t}
\frac{1}{m^{n-\ell}}\E_{\mathcal{G}^d_{n-1,m,K}}
\Bigl[
\sigma_d(X_{\ell,m,K}^{d,0} (\rdown{s}_K ),X_{\ell,m,K}^{d,(0,n,k,\ell)} (\rdown{s}_K ))\nonumber\\
&\qquad\qquad\qquad\qquad\qquad
-
\sigma_d(X_{\ell-1,m,K}^{d,0} (\rdown{s}_K ),X_{\ell-1,m,K}^{d,(0,n,k,\ell)} (\rdown{s}_K ))\Bigr]
dW^{d,0}(s)\nonumber\\
&= \xi_d +t\mu_d(0,0)+\sigma_d(0,0)W^{d,0}(t)\nonumber\\
&\qquad+ \sum_{\ell=1}^{n-1}
t
\E_{\mathcal{G}^d_{n-1,m,K}}
\Bigl[
\mu_d (X_{\ell,m,K}^{d,0} (\rdown{t\unif^{1}}_K ),X_{\ell,m,K}^{d,1} (\rdown{t\unif^{1}}_K ))\nonumber\\
&\qquad\qquad\qquad\qquad
-
\mu_d (X_{\ell-1,m,K}^{d,0} (\rdown{t\unif^{1}}_K ),X_{\ell-1,m,K}^{d,1} (\rdown{t\unif^{1}}_K ))
\Bigr]\nonumber
\\
&\qquad+ \sum_{\ell=1}^{n-1}\int_{0}^{t}
\E_{\mathcal{G}^d_{n-1,m,K}}
\Bigl[
\sigma_d(X_{\ell,m,K}^{d,0} (\rdown{s}_K ),X_{\ell,m,K}^{d,1} (\rdown{s}_K ))\nonumber\\
&\qquad\qquad\qquad\qquad\qquad
-
\sigma_d(X_{\ell-1,m,K}^{d,0} (\rdown{s}_K ),X_{\ell-1,m,K}^{d,1} (\rdown{s}_K ))\Bigr]
dW^{d,0}(s)\nonumber\\
&=
 \xi_d +t\mu_d(0,0)+\sigma_d(0,0)W^{d,0}(t)\nonumber\\
&\qquad+ \sum_{\ell=1}^{n-1}
\int_{0}^{t}
\E_{\mathcal{G}^d_{n-1,m,K}}
\Bigl[
\mu_d (X_{\ell,m,K}^{d,0} (\rdown{s}_K ),X_{\ell,m,K}^{d,1} (\rdown{s}_K ))\nonumber\\
&\qquad\qquad\qquad\qquad
-
\mu_d (X_{\ell-1,m,K}^{d,0} (\rdown{s}_K ),X_{\ell-1,m,K}^{d,1} (\rdown{s}_K ))
\Bigr]ds\nonumber
\\
&\qquad+ \sum_{\ell=1}^{n-1}\int_{0}^{t}
\E_{\mathcal{G}^d_{n-1,m,K}}
\Bigl[
\sigma_d(X_{\ell,m,K}^{d,0} (\rdown{s}_K ),X_{\ell,m,K}^{d,1} (\rdown{s}_K ))\nonumber\\
&\qquad\qquad\qquad\qquad\qquad
-
\sigma_d(X_{\ell-1,m,K}^{d,0} (\rdown{s}_K ),X_{\ell-1,m,K}^{d,1} (\rdown{s}_K ))\Bigr]
dW^{d,0}(s)\nonumber\\
&=\xi_d+ \int_{0}^{t}
\E\!\left[\mu_d (x,X^{d,0}_{n-1,m,K}(\rdown{s}_K))\right]\!|_{x=X^{d,0}_{n-1,m,K}(\rdown{s}_K)}\,ds\nonumber\\
&\qquad
+
\int_{0}^{t}
\E\!\left[\sigma_d (x,X^{d,0}_{n-1,m,K}(\rdown{s}_K))\right]\!|_{x=X^{d,0}_{n-1,m,K}(\rdown{s}_K)}\,dW^{d,0}(s).
\end{align}
This and \eqref{a14} 
prove for all $d,m,n,K\in \N$, $t\in [0,T]$ that
\begin{align}
&
\E_{\mathcal{G}^d_{n-1,m,K}}\!\left[
X^{d,0}_{n,m,K}(t)\right]- X_K^{d,0}(t)\nonumber\\
&= \int_{0}^{t}
\E\!\left[\mu_d (x,X^{d,0}_{n-1,m,K}(\rdown{s}_K))\right]\!|_{x=X^{d,0}_{n-1,m,K}(\rdown{s}_K)}
-\E \!\left[\mu(x,X^{d,0}_K(\rdown{s}_K))\right]\!|_{x=X^{d,0}_K(\rdown{s}_K)}
\,ds\nonumber\\
&
+
\int_{0}^{t}
\E\!\left[\sigma_d (x,X^{d,0}_{n-1,m,K}(\rdown{s}_K))\right]\!|_{x=X^{d,0}_{n-1,m,K}(\rdown{s}_K)}-\E \!\left[\sigma(x,X^{d,0}_K(\rdown{s}_K))\right]\!|_{x=X^{d,0}_K(\rdown{s}_K)}\,dW^{d,0}(s).
\end{align}
This, the triangle inequality, Jensen's inequality, It\^o's isometry, \eqref{a11b}, and the disintegration theorem (see, e.g., \cite[Lemma 2.2]{HJK+2020}) show for all $d,m,n,K\in \N$, $t\in [0,T]$ that
\begin{align}
&
\left\lVert
\E_{\mathcal{G}^d_{n-1,m,K}}\!\left[
X^{d,0}_{n,m,K}(t)\right]- X_K^{d,0}(t)\right\rVert_{L^2(\P;\R^d)}
\nonumber\\
&\leq \sqrt{T}\Biggl(
\int_{0}^{t}
\Bigl\lVert
\E\!\left[\mu_d (x,X^{d,0}_{n-1,m,K}(\rdown{s}_K))\right]\!|_{x=X^{d,0}_{n-1,m,K}(\rdown{s}_K)}\nonumber\\
&\qquad\qquad
-\E \!\left[\mu_d(x,X^{d,0}_K(\rdown{s}_K))\right]\!|_{x=X^{d,0}_K(\rdown{s}_K)}\Bigr\rVert_{L^2(\P;\R^d)}^2
\,ds\Biggr)^\frac{1}{2}\nonumber\\
&
+\Biggl(
\int_{0}^{t}
\Bigl\lVert
\E\!\left[\sigma_d (x,X^{d,0}_{n-1,m,K}(\rdown{s}_K))\right]\!|_{x=X^{d,0}_{n-1,m,K}(\rdown{s}_K)}\nonumber\\
&\qquad\qquad-\E \!\left[\sigma_d(x,X^{d,0}_K(\rdown{s}_K))\right]\!|_{x=X^{d,0}_K(\rdown{s}_K)}\Bigr \rVert^2_{L^2(\P;\R^{d\times d})}ds
\Biggr)^\frac{1}{2}
\nonumber\\
&\leq (\sqrt{T}+1)c \left(\int_{0}^{t}\left\lVert X^{d,0}_{n-1,m,K}(\rdown{s}_K)-X^{d,0}_{K}(\rdown{s}_K)\right\rVert_{L^2(\P;\R^d)}^2 ds\right)^\frac{1}{2}.\label{b01}
\end{align}

\refstepcounter{step}\label{s11}
\emph{Step~\thestep.}\ 
We consider the statistical error. 
The distributional properties (cf.\ Step~\ref{s01}) imply  for all 
$d,K,n,m\in\N$, $\ell\in [1,n-1]\cap\N$ that
\begin{enumerate}[(A)]%\itemsep0pt
\item  we have that
$(X^{d,(0,n,1,\ell)}_{\ell,m,K}, X^{d,(0,n,1,\ell)}_{\ell-1,m,K}, \unif^{(0,n,1,\ell)}) $ and 
$\mathcal{G}^d_{n-1,m,K}$ are independent,
\item 
we have $\P$-a.s.\ that
$(X^{d,(0,n,k,\ell)}_{\ell,m,K}, X^{d,(0,n,k,\ell)}_{\ell-1,m,K}, \unif^{(0,n,k,\ell)}) $, $k\in\N$,
are i.i.d. under $\P[\cdot |\mathcal{G}^d_{n-1,m,K}]$, and
\item 
we have  that
$((X^{d,(0,n,1,\ell)}_{\ell,m,K}, X^{d,(0,n,1,\ell)}_{\ell-1,m,K}) $
and 
$((X^{d,0}_{\ell,m,K}, X^{d,0}_{\ell-1,m,K}) $ are identically distributed.
\end{enumerate}
This,
\eqref{a05c},
the triangle inequality,
Bienaym\'e's identity,
the assumptions on distributions,  and 
the disintegration theorem (see, e.g., \cite[Lemma 2.2]{HJK+2020})
prove that for all 
$d,K,n,m\in\N$, 
$t\in [0,T]$ we have $\P$-a.s.\
that
\begin{align}
&\left(\var_{\mathcal{G}^d_{n-1,m,K}}\!\left
[
X^{d,0}_{n,m,K}(t)\right]\right)^\frac{1}{2} 
\nonumber\\
&= \sum_{\ell=1}^{n-1}\Biggl(\var_{\mathcal{G}^d_{n-1,m,K}}\Biggl[ \sum_{k=1}^{m^{n-\ell}}
\frac{t}{m^{n-\ell}}
\Bigl[
\mu_d (X_{\ell,m,K}^{d,0} (\rdown{t\unif^{(0,n,k,\ell)}}_K ),X_{\ell,m,K}^{d,(0,n,k,\ell)} (\rdown{t\unif^{(0,n,k,\ell)}}_K ))\nonumber\\
&\qquad\qquad\qquad\qquad
-
\mu_d (X_{\ell-1,m,K}^{d,0} (\rdown{t\unif^{(0,n,k,\ell)}}_K ),X_{\ell-1,m,K}^{d,(0,n,k,\ell)} (\rdown{t\unif^{(0,n,k,\ell)}}_K ))
\Bigr]\Biggr]\Biggr)^\frac{1}{2}\nonumber
\\
&\quad + \sum_{\ell=1}^{n-1}\Biggl(\var_{\mathcal{G}^d_{n-1,m,K}}\Biggl[\sum_{k=1}^{m^{n-\ell}}\int_{0}^{t}
\frac{1}{m^{n-\ell}}
\Bigl[
\sigma_d(X_{\ell,m,K}^{d,0} (\rdown{s}_K ),X_{\ell,m,K}^{d,(0,n,k,\ell)} (\rdown{s}_K ))\nonumber\\
&\qquad\qquad\qquad\qquad\qquad
-
\sigma_d(X_{\ell-1,m,K}^{d,0} (\rdown{s}_K ),X_{\ell-1,m,K}^{d,(0,n,k,\ell)} (\rdown{s}_K ))\Bigr]
dW^{d,0}(s)\Biggr]\Biggr)^\frac{1}{2}\nonumber\\
&=\sum_{\ell=1}^{n-1}\Biggl[\frac{1}{\sqrt{m^{n-\ell}}}
\biggl(\var_{\mathcal{G}^d_{n-1,m,K}}
\Bigl[
t\mu_d (X_{\ell,m,K}^{d,0} (\rdown{t\unif^{1}}_K ),X_{\ell,m,K}^{d,1} (\rdown{t\unif^{1}}_K ))\nonumber\\
&\qquad\qquad\qquad\qquad
-
t\mu_d (X_{\ell-1,m,K}^{d,0} (\rdown{t\unif^{1}}_K ),X_{\ell-1,m,K}^{d,1} (\rdown{t\unif^{1}}_K ))
\Bigr]\biggr)^\frac{1}{2}\Biggr]\nonumber\\
&\quad +\sum_{\ell=1}^{n-1}\Biggl[\frac{1}{\sqrt{m^{n-\ell}}}
\biggl(\var_{\mathcal{G}^d_{n-1,m,K}}\biggl[\int_{0}^{t}
\sigma_d (X_{\ell,m,K}^{d,0} (\rdown{s}_K ),X_{\ell,m,K}^{d,1} (\rdown{s}_K ))\nonumber
\\
&\qquad\qquad\qquad
-
\sigma_d (X_{\ell-1,m,K}^{d,0} (\rdown{s}_K ),X_{\ell-1,m,K}^{d,1} (\rdown{s}_K ))
\,dW^{d,0}(s)
\biggr]\biggr)^\frac{1}{2}\Biggr].\label{a16}
\end{align}
Next, a property of conditional expectations, 
the disintegration theorem (see, e.g., \cite[Lemma 2.2]{HJK+2020}), the distributional properties (see Step~\ref{s01}), 
the fact that for all $t\in [0,T]$ we have that 
$t\unif^1$ is uniformly distributed on $[0,t]$,
and \eqref{a11b} show for all 
$d,m,n,K\in \N$, $t\in [0,T]$, $\ell\in [1,n-1]$
that
\begin{align}
&\biggl \lVert\biggl(\var_{\mathcal{G}^d_{n-1,m,K}}
\Bigl[
t\mu_d (X_{\ell,m,K}^{d,0} (\rdown{t\unif^{1}}_K ),X_{\ell,m,K}^{d,1} (\rdown{t\unif^{1}}_K ))\nonumber\\
&\qquad\qquad\qquad\qquad
-
t\mu_d (X_{\ell-1,m,K}^{d,0} (\rdown{t\unif^{1}}_K ),X_{\ell-1,m,K}^{d,1} (\rdown{t\unif^{1}}_K ))
\Bigr]\biggr)^\frac{1}{2}\biggr \rVert_{L^2(\P;\R)}\nonumber
\\
&\leq \biggl \lVert\biggl(\E_{\mathcal{G}^d_{n-1,m,K}}
\Bigl[\Bigl \lVert
t\mu_d (X_{\ell,m,K}^{d,0} (\rdown{t\unif^{1}}_K ),X_{\ell,m,K}^{d,1} (\rdown{t\unif^{1}}_K ))\nonumber\\
&\qquad\qquad\qquad\qquad
-
t\mu_d (X_{\ell-1,m,K}^{d,0} (\rdown{t\unif^{1}}_K ),X_{\ell-1,m,K}^{d,1} (\rdown{t\unif^{1}}_K ))\Bigr \rVert^2
\Bigr]\biggr)^\frac{1}{2}\biggr \rVert_{L^2(\P;\R)}\nonumber\\
&\leq \Biggl( \E \biggl[\E_{\mathcal{G}^d_{n-1,m,K}}
\Bigl[\Bigl \lVert
t\mu_d (X_{\ell,m,K}^{d,0} (\rdown{t\unif^{1}}_K ),X_{\ell,m,K}^{d,1} (\rdown{t\unif^{1}}_K ))\nonumber\\
&\qquad\qquad\qquad\qquad
-
t\mu_d (X_{\ell-1,m,K}^{d,0} (\rdown{t\unif^{1}}_K ),X_{\ell-1,m,K}^{d,1} (\rdown{t\unif^{1}}_K ))\Bigr \rVert^2
\Bigr]\biggr]\Biggr)^\frac{1}{2}\nonumber\\
&=
 \Biggl( \E \biggl[\Bigl \lVert
t\mu_d (X_{\ell,m,K}^{d,0} (\rdown{t\unif^{1}}_K ),X_{\ell,m,K}^{d,1} (\rdown{t\unif^{1}}_K ))\nonumber\\
&\qquad\qquad\qquad\qquad
-
t\mu_d (X_{\ell-1,m,K}^{d,0} (\rdown{t\unif^{1}}_K ),X_{\ell-1,m,K}^{d,1} (\rdown{t\unif^{1}}_K ))\Bigr \rVert^2\biggr]\Biggr)^\frac{1}{2}\nonumber\\
&\leq 
\sqrt{T}
 \Biggl( \E \biggl[t\Bigl \lVert
\mu_d (X_{\ell,m,K}^{d,0} (\rdown{t\unif^{1}}_K ),X_{\ell,m,K}^{d,1} (\rdown{t\unif^{1}}_K ))\nonumber\\
&\qquad\qquad\qquad\qquad
-
\mu_d (X_{\ell-1,m,K}^{d,0} (\rdown{t\unif^{1}}_K ),X_{\ell-1,m,K}^{d,1} (\rdown{t\unif^{1}}_K ))\Bigr \rVert^2\biggr]\Biggr)^\frac{1}{2}\nonumber\\
&=\sqrt{T}
 \Biggl( \E \biggl[\int_{0}^{t}\Bigl \lVert
\mu_d (X_{\ell,m,K}^{d,0} (\rdown{s}_K ),X_{\ell,m,K}^{d,1} (\rdown{s}_K ))\nonumber\\
&\qquad\qquad\qquad\qquad
-
\mu_d (X_{\ell-1,m,K}^{d,0} (\rdown{s}_K ),X_{\ell-1,m,K}^{d,1} (\rdown{s}_K ))\Bigr \rVert^2ds\biggr]\Biggr)^\frac{1}{2}\nonumber\\
&\leq c\sqrt{T}\left(\E\!\left[\int_{0}^{t}\left\lVert  X^{d,0}_{\ell,m,K}(\rdown{s}_K)-
X^{d,0}_{\ell-1,m,K}(\rdown{s}_K)\right
\rVert^2ds\right] \right)^\frac{1}{2}\nonumber\\
&\leq c\sqrt{T}\sum_{j=\ell-1}^{\ell}\left(
\int_{0}^{t}\sup_{r\in [0,s]}\left\lVert X^{d,0}_{j,m,K}(r)-X^{d,0}_K(r)\right\rVert_{L^2(\P;\R^d)} ^2\,ds
\right)^\frac{1}{2}.
\end{align}
Similarly but using It\^o's formula we have for all
$d,K,n,m\in\N$, 
$t\in [0,T]$, $\ell\in [1,n-1]\cap\Z$ that, $\P$-a.s.,
\begin{align} 
&\Biggl\lVert
\biggl(\var_{\mathcal{G}^d_{n-1,m,K}}\biggl[\int_{0}^{t}
\sigma_d (X_{\ell,m,K}^{d,0} (\rdown{s}_K ),X_{\ell,m,K}^{d,1} (\rdown{s}_K ))\nonumber
\\
&\qquad\qquad\qquad
-
\sigma_d (X_{\ell-1,m,K}^{d,0} (\rdown{s}_K ),X_{\ell-1,m,K}^{d,1} (\rdown{s}_K ))
\,dW^{d,0}(s)
\biggr]\biggr)^\frac{1}{2}
\Biggr\rVert_{L^2(\P;\R)}\nonumber
\\
&
\leq \Biggl\lVert
\biggl(\E_{\mathcal{G}^d_{n-1,m,K}}\biggl[\biggl\lVert\int_{0}^{t}
\sigma_d (X_{\ell,m,K}^{d,0} (\rdown{s}_K ),X_{\ell,m,K}^{d,1} (\rdown{s}_K ))\nonumber\\
&\qquad\qquad\qquad\qquad
-
\sigma_d (X_{\ell-1,m,K}^{d,0} (\rdown{s}_K ),X_{\ell-1,m,K}^{d,1} (\rdown{s}_K ))
\,dW^{d,0}(s)\biggr\rVert^2
\biggr]\biggr)^\frac{1}{2}\Biggr\rVert_{L^2(\P;\R)}\nonumber\\
&= \Biggl(\E\biggl[
\E_{\mathcal{G}^d_{n-1,m,K}}\biggl[\biggl\lVert\int_{0}^{t}
\sigma_d (X_{\ell,m,K}^{d,0} (\rdown{s}_K ),X_{\ell,m,K}^{d,1} (\rdown{s}_K ))\nonumber\\
&\qquad\qquad\qquad\qquad
-
\sigma_d (X_{\ell-1,m,K}^{d,0} (\rdown{s}_K ),X_{\ell-1,m,K}^{d,1} (\rdown{s}_K ))
\,dW^{d,0}(s)\biggr\rVert^2\biggr]
\biggr]\Biggr)^\frac{1}{2}\nonumber\\
&\leq \Biggl(\E
\Biggl[\biggl\lVert\int_{0}^{t}
\sigma_d (X_{\ell,m,K}^{d,0} (\rdown{s}_K ),X_{\ell,m,K}^{d,1} (\rdown{s}_K ))\nonumber\\
&\qquad
\qquad
\qquad
-
\sigma_d (X_{\ell-1,m,K}^{d,0} (\rdown{s}_K ),X_{\ell-1,m,K}^{d,1} (\rdown{s}_K ))
\,dW^{d,0}(s)\biggr\rVert^2
\Biggr]\Biggr)^\frac{1}{2}\nonumber
\\
&=\biggl(
\int_{0}^{t}
\E\biggl[
\Bigl\lVert
\sigma_d (X_{\ell,m,K}^{d,0} (\rdown{s}_K ),X_{\ell,m,K}^{d,1} (\rdown{s}_K ))\nonumber\\
&\qquad\qquad\qquad\qquad
-
\sigma_d (X_{\ell-1,m,K}^{d,0} (\rdown{s}_K ),X_{\ell-1,m,K}^{d,1} (\rdown{s}_K ))
\Bigr\rVert^2
\biggr]ds\biggr)^\frac{1}{2}\nonumber
\\
&\leq c\left( \int_{0}^{t}
\left\lVert
X_{\ell,m,K}^{d,0} (\rdown{s}_K )
-X_{\ell-1,m,K}^{d,0} (\rdown{s}_K )
\right\rVert^2ds\right)^\frac{1}{2}
\nonumber\\
&\leq c\sum_{j=\ell-1}^{\ell}\left(
\int_{0}^{t}\sup_{r\in [0,s]}\left\lVert X^{d,0}_{j,m,K}(r)-X^{d,0}_K(r)\right\rVert_{L^2(\P;\R^d)} ^2\,ds
\right)^\frac{1}{2}.\label{a17}
\end{align}

\refstepcounter{step}\label{s09}
\emph{Step~\thestep.}\ We combine the bias, the statistical error, and the discretization error to obtain the global error and to show for all $d,m,n,K\in \N$, $t\in [0,T]$ that
\begin{align}
	&
	\left\lVert 
	X^{d,0}_{n,m,K}(t)-X_\infty^{d,0}(t)
	\right\rVert_{L^2(\P;\R^d)}\nonumber\\
	&\leq \sqrt{2}(\sqrt{T}+1)cd^ce^{(\sqrt{T}+1)^2c^2T}
	e^{2nc(\sqrt{T}+1)\sqrt{T} }\frac{e^{m/2}}{m^{n/2}}
	+4.5
	\sqrt{T/K}cd^ce^{4(\sqrt{T}+1)(T+1)c^2}.
	\label{a19c}
\end{align}
Combining \eqref{a16}--\eqref{a17}, properties of conditional expectations, and the triangle inequality we have for all $d,n,m,K\in \N$, $t\in [0,T]$ that
\begin{align}
&
\left\lVert 
X^{d,0}_{n,m,K}(t)-
\E _{\mathcal{G}^{d}_{n-1,m,K} }\!\left[
X^{d,0}_{n,m,K}(t)\right]
\right\rVert_{L^2(\P;\R^d)}^2\nonumber\\
&
=\E \!\left[
\left\lVert 
X^{d,0}_{n,m,K}(t)-
\E _{\mathcal{G}^{d}_{n-1,m,K} }\!\left[
X^{d,0}_{n,m,K}(t)\right]
\right\rVert^2\right]\nonumber\\
&=\E \left[\E_{\mathcal{G}^{d}_{n-1,m,K} } \!\left[
\left\lVert 
X^{d,0}_{n,m,K}(t)-
\E _{\mathcal{G}^{d}_{n-1,m,K} }\!\left[
X^{d,0}_{n,m,K}(t)\right]
\right\rVert^2\right]
\right]\nonumber\\
&=\E\!\left[\var_{\mathcal{G}^{d}_{n-1,m,K}\! }\left[X^{d,0}_{n,m,K}(t)\right]\right]\nonumber\\
&=
\left\lVert
\left(\var_{\mathcal{G}^d_{n-1,m,K}}\!\left
[
X^{d,0}_{n,m,K}(t)\right]\right)^\frac{1}{2} 
\right\rVert_{L^2(\P;\R)}
\nonumber\\
&\leq \sum_{\ell=1}^{n-1}
\frac{1}{\sqrt{m^{n-\ell}}}c(\sqrt{T}+1)\sum_{j=\ell-1}^{\ell}\left(
\int_{0}^{t}\sup_{r\in [0,s]}\left\lVert X^{d,0}_{j,m,K}(r)-X^{d,0}_K(r)\right\rVert_{L^2(\P;\R^d)} ^2\,ds
\right)^\frac{1}{2}\nonumber\\
&\leq \sum_{\ell=0}^{n-1}
\frac{(2-\1_{\{n-1\}}(\ell))c(\sqrt{T}+1)}{\sqrt{m^{n-\ell-1}}}
\left(
\int_{0}^{t}\sup_{r\in [0,s]}\left\lVert X^{d,0}_{\ell,m,K}(r)-X^{d,0}_K(r)\right\rVert_{L^2(\P;\R^d)} ^2\,ds
\right)^\frac{1}{2}.
\end{align}
This, the triangle inequality, and \eqref{b01} prove for all $d,n,m,K\in \N$, $t\in [0,T]$ that
\begin{align}
&
\left\lVert 
X^{d,0}_{n,m,K}(t)-X_K^{d,0}(t)
\right\rVert_{L^2(\P;\R^d)}\nonumber\\
&
\leq 
\left\lVert 
X^{d,0}_{n,m,K}(t)-
\E _{\mathcal{G}^{d}_{n-1,m,K} }\!\left[
X^{d,0}_{n,m,K}(t)\right]
\right\rVert_{L^2(\P;\R^d)}
+
\left\lVert 
\E _{\mathcal{G}^{d}_{n-1,m,K} }\!\left[
X^{d,0}_{n,m,K}(t)\right]
-X_K^{d,0}(t)
\right\rVert_{L^2(\P;\R^d)}\nonumber\\
&\leq \sum_{\ell=0}^{n-1}
\frac{2c(\sqrt{T}+1)}{\sqrt{m^{n-\ell-1}}}
\left(
\int_{0}^{t}\sup_{r\in [0,s]}\left\lVert X^{d,0}_{\ell,m,K}(r)-X^{d,0}_K(r)\right\rVert_{L^2(\P;\R^d)} ^2\,ds
\right)^\frac{1}{2}.
\end{align}
This and the monotonicity of the integral show
for all $d,n,m,K\in \N$, $t\in [0,T]$ that
\begin{align}
&\sup_{u\in[0,t]}
\left\lVert 
X^{d,0}_{n,m,K}(u)-X_K^{d,0}(u)
\right\rVert_{L^2(\P;\R^d)}\nonumber\\
&\leq \sum_{\ell=0}^{n-1}
\frac{2c(\sqrt{T}+1)}{\sqrt{m^{n-\ell-1}}}
\left(
\int_{0}^{t}\sup_{r\in [0,s]}\left\lVert X^{d,0}_{\ell,m,K}(r)-X^{d,0}_K(r)\right\rVert_{L^2(\P;\R^d)} ^2\,ds
\right)^\frac{1}{2}.
\end{align}
This, \cite[Lemma~3.9]{HJKN2020}, \eqref{a18}, the fact that
$\forall\, d,m,K\colon X^{d,0}_{0,m,K}=0$, and \eqref{a11c}
 show for all $n,m,d,K\in \N$,
$t\in [0,T]$ that
\begin{align}
&
\sup_{u\in[0,t]}
\left\lVert 
X^{d,0}_{n,m,K}(u)-X_K^{d,0}(u)
\right\rVert_{L^2(\P;\R^d)} \nonumber \\
&
\leq 
\left(2c(\sqrt{T}+1)T^{1/2} \sup_{t\in [0,T]} \left\lVert X^{d,0}_{K}(t)\right\rVert_{L^2(\P;\R^d)}  \right)
\left[\max_{k\in\{0,1,\ldots,n\}} \frac{1}{\sqrt{m^{n-k}k!}}\right]
\left(1+2c(\sqrt{T}+1)T^{1/2}\right)^{n-1}\nonumber\\
&\leq \left(
\sup_{t\in [0,T]} \left\lVert X^{d,0}_{K}(t)\right\rVert_{L^2(\P;\R^d)}
\right)\left(1+2c(\sqrt{T}+1)T^{1/2}\right)^{n}
\left[\max_{k\in\{0,1,\ldots,n\}} \frac{\sqrt{m^k}}{\sqrt{m^{n}k!}}\right]\nonumber\\
&\leq \sqrt{2}\left(
\lVert\xi_d\rVert+(\sqrt{T}+1)
\max\{\lVert\mu_d(0,0)\rVert,\lVert\sigma_d(0,0)\rVert\}
\right)e^{(\sqrt{T}+1)^2c^2T}
e^{2nc(\sqrt{T}+1)\sqrt{T} }\frac{e^{m/2}}{m^{n/2}}\nonumber\\
&\leq \sqrt{2}(\sqrt{T}+1)cd^ce^{(\sqrt{T}+1)^2c^2T}
e^{2nc(\sqrt{T}+1)\sqrt{T} }\frac{e^{m/2}}{m^{n/2}}.
\end{align}
This, the triangle inequality, and \eqref{a15} show for all $d,m,n,K\in \N$, $t\in [0,T]$ that
\begin{align}
&
\left\lVert 
X^{d,0}_{n,m,K}(t)-X_\infty^{d,0}(t)
\right\rVert_{L^2(\P;\R^d)}\nonumber\\
&
\leq 
\left\lVert 
X^{d,0}_{n,m,K}(t)-X_K^{d,0}(t)
\right\rVert_{L^2(\P;\R^d)}
+
\left\lVert 
X_K^{d,0}(t)-X_\infty^{d,0}(t)
 \right\rVert_{L^2(\P;\R^d)}\nonumber\\
&\leq \sqrt{2}(\sqrt{T}+1)cd^ce^{(\sqrt{T}+1)^2c^2T}
e^{2nc(\sqrt{T}+1)\sqrt{T} }\frac{e^{m/2}}{m^{n/2}}
+4.5
\sqrt{T/K}cd^ce^{4(\sqrt{T}+1)(T+1)c^2}.\label{c04}
\end{align}
This shows \eqref{a19c}.

\refstepcounter{step}\label{s08}
\emph{Step~\thestep.}\ We analyze the computational cost. Recall that
$\mathsf{C}^d_{n,m,K}$ the computational cost to construct the whole discrete process $(X_{n,m,K}^\theta(\frac{kT}{K}))_{k\in [1,K]\cap\Z}$ given that we have prepared a discrete Brownian path $W^\theta(\frac{kT}{K}))_{k\in [1,K]\cap\Z} $.
Note that by definition for all $d,n,m,K\in \N$ the cost $\mathsf{C}^d_{n,m,K}$ contains the following costs:
\begin{itemize}
\item The cost to calculate $\mu_d(0,0)$ is $\mathsf{cost}_{\mu_d}$.
\item The cost to calculate $\sigma_d(0,0)$ is $\mathsf{cost}_{\sigma_d}$.
\item For each $\ell \in [1,n-1]\cap\Z$, $k\in [1,m^{n-\ell}]\cap\Z$:
\begin{itemize}
\item We need to prepare two processes
$X^{d,\theta}_{\ell,m,K}$ and $X^{d,\theta}_{\ell-1,m,K}$, which brings the cost 
$\mathsf{C}^d_{\ell,m,K}+\mathsf{C}^d_{\ell-1,m,K}$.
\item We need to prepare two processes
$X^{d,(\theta,n,k,\ell)}_{\ell,m,K}$ and $X^{d,(\theta,n,k,\ell)}_{\ell-1,m,K}$, 
which come from a new source of Brownian randomness,
which brings the cost 
$\mathsf{C}^d_{\ell,m,K}+\mathsf{C}^d_{\ell-1,m,K}+ Kd\mathsf{cost}_{\mathsf{rv}}$. Here,
$Kd$ is the size of the Brownian motion and 
$\mathsf{cost}_{\mathsf{rv}}$ is the cost to generate a scalar random variable.
\item We need to prepare a scalar random  variable, $\mathfrak{u}^{(\theta,n,k,\ell)}$, which brings the cost $\mathsf{cost}_\mathsf{rv} $
\item We need to evaluate $\mu$ at two points which brings the cost $2\mathsf{cost}_{\mu_d}$
\item We need to evaluate $\sigma_d$ at 
each point of the two processes
$((X_{\ell,m,K}^{d,\theta} (\frac{kT}{T} ),X_{\ell,m,K}^{d,(\theta,n,k,\ell)} (\frac{kT}{T} )))_{k\in [1,K]}$ and 
$((X_{\ell-1,m,K}^{d,\theta} (\frac{kT}{T} ),X_{\ell-1,m,K}^{d,(\theta,n,k,\ell)} (\frac{kT}{T} )))_{k\in [1,K]}$ which brings the cost $2K\mathsf{cost}_{\sigma_d}$
\end{itemize}
\end{itemize}
To summary we have for all $d,m,n,K\in \N$ that
\begin{align} \begin{split} 
\mathsf{C}^d_{n,m,K}\leq \mathsf{cost}_{\mu_d}+\mathsf{cost}_{\sigma_d}+\sum_{\ell=1}^{n-1}m^{n-\ell}\Bigl(&
(\mathsf{C}^d_{\ell,m,K} + \mathsf{C}^d_{\ell-1,m,K})
+
(\mathsf{C}^d_{\ell,m,K} + \mathsf{C}^d_{\ell-1,m,K}+Kd\mathsf{cost}_{\mathsf{rv}})\\
&+\mathsf{cost}_\mathsf{rv}+2\mathsf{cost}_{\mu_d}+2K\mathsf{cost}_{\sigma_d}\Bigr).\end{split}
\end{align}
This and \eqref{c10} show for all $n,m,d,K\in \N$ that
\begin{align}
\mathsf{C}^d_{n,m,K}\leq cd^c+ \sum_{\ell=1}^{n-1}
m^{n-\ell} \left(2\mathsf{C}^d_{\ell,m,K}+2\mathsf{C}^d_{\ell-1,m,K}+cd^c2Kd\right)
\end{align}
and hence
\begin{align}
m^{-n}
\mathsf{C}^d_{n,m,K}
&\leq cd^cm^{-n}
+\sum_{\ell=1}^{n-1}\left(
2m^{-\ell}
\mathsf{C}^d_{\ell,m,K}
+2m^{-(\ell-1)}
\mathsf{C}^d_{\ell-1,m,K}+2cd^cKd\right)\nonumber\\
&=
 cd^cm^{-n}
+2cd^cKdn
+\sum_{\ell=1}^{n-1}\left(
2m^{-\ell}
\mathsf{C}^d_{\ell,m,K}
+2m^{-(\ell-1)}
\mathsf{C}^d_{\ell-1,m,K}\right).
\end{align}
This, the fact that 
$\forall\,d,m,K\colon\mathsf{C}^d_{0,m,K}=0$, and
\cite[Corollary~2.3]{HKN2022}
(applied 
for every $d,m,K\in \N$ 
with
$
(a_n)_{n\in \N_0}
\gets (m^{-n}
\mathsf{C}^d_{n,m,K})_{n\in \N_0}$,
$
\kappa\gets 2
$, $\lambda\gets 2$
$c_1\gets cd^c$,
$c_2\gets 2cd^cKd$,
$c_3\gets 0$,
$c_4\gets 0$,
$\beta\gets \frac{1+2+\sqrt{(1+2)^2+4\cdot 2}}{2}$ in the notation of 
\cite[Corollary~2.3]{HKN2022})
show for all $d,m,n,K\in \N$ that
\begin{align}
m^{-n}
\mathsf{C}^d_{n,m,K}\leq \frac{3}{2}4^ncd^c+\frac{3\cdot2cd^cKd(4^n-1)}{2(4-1)}
\leq \frac{3}{2}4^ncd^c+4^ncd^cKd\leq 8^ncd^cKd
.
\end{align}
and hence 
$
\mathsf{C}^d_{n,m,K}\leq (8m)^ncd^cKd
$.
Therefore, for all $n,d\in \N$ we have that
\begin{align}\label{c15}
\mathsf{C}^d_{n,n,n^n}\leq (8n)^ncd^cn^nd\leq (8n)^{2n}cd^{c+1}
\end{align}
and hence
\begin{align}
\mathsf{C}^d_{n+1,n+1,(n+1)^{(n+1)}}\leq [8(n+1)]^{2(n+1)}cd^{c+1}\leq (16n)^{2n+2}cd^{c+1}
=16^{2n+2}n^2n^{2n}cd^{c+1}.\label{c11}
\end{align}

\refstepcounter{step}\label{s07}
\emph{Step~\thestep.}\ We analyze the computational complexity.
For every $d\in \N$, $\epsilon\in (0,1)$ let 
\begin{align}
\mathsf{n}_{d,\epsilon} =\inf\!\left(\left \{
n\in \N\colon \sup_{k\in [n,\infty)\cap\Z}
\sup _{t\in [0,T]}
\left\lVert
X^{d,0}_{k,k,k^k}(t)-X^{d,0}_\infty(t)
\right\rVert_{L^2(\P;\R^d)}<\epsilon
\right\}\cup\{\infty\}\right).\label{c16}
\end{align}
Next, \eqref{a19c} shows that there exists $C\in [1,\infty)$ such that for all $d,n\in \N$ we have that
\begin{align}
\left\lVert 
X^{d,0}_{n,n,n^n}(t)-X_\infty^{d,0}(t)
\right\rVert_{L^2(\P;\R^d)}\leq \frac{2cd^cC^ne^{n/2}}{n^{n/2}}.\label{c13}
\end{align}
This proves for all $d\in \N$ that
\begin{align}
\lim_{n\to\infty}
\left\lVert 
X^{d,0}_{n,n,n^n}(t)-X_\infty^{d,0}(t)
\right\rVert_{L^2(\P;\R^d)}=0.
\end{align}
Therefore, for all $d\in \N$, $\epsilon\in (0,1)$ we have that $\mathsf{n}_{d,\epsilon}\in \N$. 
Next, for every $\delta\in (0,1)$ let
\begin{align}
\mathfrak{C}_\delta =c^6 \sup_{n\in\N}
\left[16^{2n+2}n^2(2C^ne^{n/2})^{5}n^{-\delta n/2}\right].\label{c14}
\end{align}
Observe for all $\delta\in (0,1)$ that
there exist $\gamma\in (0,\infty)$ such that
 $\mathfrak{C}_\delta<32\cdot 16^2n^2\gamma^nn^{-\delta n/2}$. Since $(n^{\delta n/2})_{n\in \N} $ increases faster than any exponential function, we obtain that $\mathfrak{C}_\delta<\infty$ for all $\delta\in (0,1)$.
Then \eqref{c11} and \eqref{c13} show for all
$d,n\in \N$, $\delta\in (0,1)$ that
\begin{align}
&
\mathsf{C}^d_{n+1,n+1,(n+1)^{(n+1)}}
\left(
\sup_{t\in [0,T]}\left\lVert 
X^{d,0}_{n,n,n^n}(t)-X_\infty^{d,0}(t)
\right\rVert_{L^2(\P;\R^d)}\right)^{4+\delta}
\nonumber\\
&\leq 16^{2n+2}n^2n^{2n}cd^{c+1}\left(\frac{2cd^cC^ne^{n/2}}{n^{n/2}}
\right)^{4+\delta}\nonumber\\
&\leq (cd^{c+1})^6\left[16^{2n+2}n^2(2C^ne^{n/2})^{5}n^{-\delta n/2}\right]\nonumber\\
&\leq (d^{c+1})^6\mathfrak{C}_\delta
<\infty.\label{c17}
\end{align}
This,
\eqref{c16}, \eqref{c15}, \eqref{c14}, and the fact that $C\geq 1$ show for all $d\in \N$, $\epsilon,\delta\in (0,1)$
that
\begin{align}
&
\mathsf{C}_{\mathsf{n}_{d,\epsilon},\mathsf{n}_{d,\epsilon}, (\mathsf{n}_{d,\epsilon})^{\mathsf{n}_{d,\epsilon}}}^d\epsilon^{4+\delta}\nonumber\\
&\leq\1_{\{1\}}(\mathsf{n}_{d,\epsilon})\mathsf{C}^d_{1,1,1}
+\1_{[2,\infty)} (\mathsf{n}_{d,\epsilon})
\mathsf{C}_{\mathsf{n}_{d,\epsilon},\mathsf{n}_{d,\epsilon}, (\mathsf{n}_{d,\epsilon})^{\mathsf{n}_{d,\epsilon}}}^d
\left(
\sup_{t\in [0,T]}\left\lVert 
X^{d,0}_{n,n,n^n}(t)-X_\infty^{d,0}(t)
\right\rVert_{L^2(\P;\R^d)}\right)^{4+\delta}|_{n=\mathsf{n}_{d,\epsilon}-1}\nonumber\\
&\leq\1_{\{1\}}(\mathsf{n}_{d,\epsilon}) 64 cd^{c+1}+
\1_{[2,\infty)} (\mathsf{n}_{d,\epsilon})
(d^{c+1})^6\mathfrak{C}_\delta\nonumber\\
&\leq (d^{c+1})^6\mathfrak{C}_\delta.\label{c18}
\end{align}
This, the fact that $\forall\,\delta\in (0,1)\colon \mathfrak{C}_\delta<\infty$, and \eqref{c16} show \eqref{c19}.
The proof of \cref{c01} is thus completed.
\end{proof}

\bibliographystyle{acm}
\bibliography{References}

\end{document}